\newtheorem{theorem}{Theorem}
\newtheorem{lemma}{Lemma}
\theoremstyle{definition}
\newtheorem{definition}{Definition}
\newcommand{\beq}{\begin{equation}}
\newcommand{\eeq}{\end{equation}}
\newcommand{\beqs}{\begin{eqnarray*}}
\newcommand{\eeqs}{\end{eqnarray*}}
\newcommand{\beqn}{\begin{eqnarray}}
\newcommand{\eeqn}{\end{eqnarray}}
\newcommand{\beqa}{\begin{array}}
\newcommand{\eeqa}{\end{array}}
\DeclareMathOperator{\TR}{tr}
\newcommand{\R}{\mathbb{R}}
\newcommand{\dd}{\mathop{}\!\mathrm{d}}
\newcommand{\set}[1]{\left\{#1\right\}}
\newcommand{\norm}[1]{\left\Vert#1\right\Vert}
\newcommand{\pd}{\partial}
\newcommand{\delbar}{\overline{\nabla}}
\newcommand{\uS}{\mathbb{S}^{n-1}}
\newcommand{\MA}{Monge-Amp\`ere }
\begin{document}

\title{Smooth solutions to the Gauss image problem}

\author{Li Chen}
\address{Faculty of Mathematics and Statistics, Hubei Key Laboratory of Applied Mathematics, Hubei University, Wuhan 430062, P.R. China}
\email{chernli@163.com}

\author{Di Wu}
\address{Faculty of Mathematics and Statistics, Hubei Key Laboratory of Applied Mathematics, Hubei University,  Wuhan 430062, P.R. China}
\email{wudi19950106@126.com}

\author{Ni Xiang}
\address{Faculty of Mathematics and Statistics, Hubei Key Laboratory of Applied Mathematics, Hubei University, Wuhan 430062, P.R. China}
\email{nixiang@hubu.edu.cn}

\thanks{This research was supported by funds from Natural Science Foundation of China No.11971157.}

\date{}

\begin{abstract}
In this paper we study the the Gauss image problem, which is a
generalization of the Aleksandrov problem in convex geometry. By
considering a geometric flow involving Gauss curvature and functions
of normal vectors and radial vectors, we obtain the existence of
smooth solutions to this problem.
\end{abstract}

\keywords{
  Monge-Amp\`ere equation,
  dual Orlicz-Minkowski problem,
  Gauss curvature flow,
  Existence of solutions.
}

\subjclass[2010]{
35J96, 52A20, 53C44.
}

\maketitle
\vskip4ex

\section{Introduction}

Let $K\subset \mathbb{R}^n$ be a convex body which contains the
origin in its interior and $x \in
\partial K$ be a boundary point, then the normal cone of $K$ at $x$
is defined by
\begin{equation*}
\mathcal{N}(K, x)=\{v \in \uS: \langle y-x, v\rangle\leq 0 \quad
\text{for all} \quad y \in K\},
\end{equation*}
where $\langle y-x, v\rangle$ denotes the standard inner product of
$y-x$ and $v$ in $\mathbb{R}^n$. For $\omega \subset \uS$, the
radial Gauss image of $\omega$ is defined by
\begin{equation*}
\alpha_{K}(\omega)=\bigcup_{x \in \rho_K(\omega)}\mathcal{N}(K,
x)\subset \uS,
\end{equation*}
where $\rho_K: \mathbb{S}^{n-1}\rightarrow \partial K$ is the radial
function of $\partial K$(see Section 2 for the definition).
Recently, Boroczky, Lutwak, Yang, Zhang and Zhao \cite{B2020}
proposed the Gauss image problem which link two given submeasures
via the radial Gauss image of a convex body:

\textbf{The Gauss image problem.} \emph{Suppose $\lambda$ is a submeasure
defined on the Lebesgue measurable subsets of $\uS$, and $\mu$ is a
Borel submeasure on $\uS$. What are the necessary and sufficient
conditions, on $\lambda$ and $\mu$, so that there exists a convex
body $K$ such that
\begin{equation} \label{GIP}
\lambda(\alpha_{K}(\cdot))=\mu
\end{equation}
on the Borel subsets of $\uS$? And if such a body exists, to what
extent is it unique?}

When $\lambda$ is spherical Lebesgue measure, the Gauss image
problem is just the classical Aleksandrov problem. It is necessary
to contrast the Gauss image problem with the various Minkowski
problems and dual Minkowski problems that have been extensively
studied, see \cite{BLYZ.JAMS.26-2013.831,
  CLZ.TAMS.371-2019.2623,
  CW.Adv.205-2006.33,
  HLYZ.DCG.33-2005.699,
  JLW.JFA.274-2018.826,
  JLZ.CVPDE.55-2016.41,
  Lu.SCM.61-2018.511,
  Lu.JDE.266-2019.4394,
  LW.JDE.254-2013.983,
  Lut.JDG.38-1993.131,
  LYZ.TAMS.356-2004.4359,
  Sta.Adv.167-2002.160,
   Zhu.Adv.262-2014.909,Zhu.JDG.101-2015.159}
for the $L_p$-Minkowski problem, \cite{BHP.JDG.109-2018.411,
  HP.Adv.323-2018.114,
  HJ.JFA.277-2019.2209,
  HLYZ.Acta.216-2016.325,
  LSW.JEMSJ.22-2020.893,
  Zha.CVPDE.56-2017.18,
  Zha.JDG.110-2018.543}
for the dual Minkowski problem,
\cite{BF.JDE.266-2019.7980,
  CHZ.MA.373-2019.953,
  CCL,
  HLYZ.JDG.110-2018.1,
  HZ.Adv.332-2018.57,
  LLL,LYZ.Adv.329-2018.85}
for the $L_p$ dual Minkowski problem,
\cite{BBC.AiAM.111-2019.101937,
  HLYZ.Adv.224-2010.2485,
  HH.DCG.48-2012.281,
  JL.Adv.344-2019.262} for the Orlicz Minkowski problem,
\cite{GHW+.CVPDE.58-2019.12,GHXY.CVPDE.59-2020.15, LL.TAMS.373-2020.5833} for the dual Orlicz Minkowski problem. In the
Gauss image problem, a pair of submeasures is given and it is asked
if there exists a convex body ``linking" them via its radial Gauss
image. However, in a Minkowski problem, only one measure is given,
and the question asks if this measure is a specific geometric
measure of a convex body.

To statement the solutions to the Gauss image problem  in
\cite{B2020}. We introduce some concepts (see \cite{B2020} for
details). If $\omega\subset \uS$ is contained in a closed
hemisphere, then the polar set $\omega^*$ is defined by
\begin{equation*}
\omega^*=\{v \in \uS: \langle u, v\rangle\leq 0 \quad \text{for all}
\quad u \in \omega\}.
\end{equation*}

\begin{definition}
Two Borel measures $\mu$ and $\lambda$ on $\uS$ are called
Aleksandrov related if
\begin{equation} \label{Ar}
\lambda(\uS)=\mu(\uS)>\lambda(\omega^*)+\mu(\omega)
\end{equation}
for any compact, spherically convex set $\omega \in \uS$.
\end{definition}

Note that $\lambda(\uS)=\mu(\uS)$ is obvious for a solution to
\eqref{GIP}. The following existence result for solutions to the
Gauss image problem was proved in \cite{B2020}.

\begin{theorem}\label{thm0}
Suppose $\lambda, \mu$ are Borel measures on $\uS$ and $\lambda$ is
absolutely continuous. If $\mu$ and $\lambda$ are Aleksandrov
related, then there exists a body $K$ containing the origin in its
interior such that $\lambda(\alpha_{K}(\cdot))=\mu$.
\end{theorem}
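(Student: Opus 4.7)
My plan is to first prove the smooth analogue of Theorem~\ref{thm0} — when $\lambda = f\,d\sigma$ and $\mu = g\,d\sigma$ with $f, g$ smooth and positive — via a geometric flow, and then recover the general case by approximation. For the smooth case I would reformulate $\lambda(\alpha_K(\cdot)) = \mu$ as a Monge-Amp\`ere equation on $\uS$: writing $K$ via its support function $h$, so that the inverse Gauss map is $v \mapsto h(v)v + \nabla h(v)$ and the radial direction is $u = (hv+\nabla h)/\sqrt{h^2+|\nabla h|^2}$, the identity $f(\alpha_K(u))\,|J\alpha_K(u)| = g(u)$ becomes a fully nonlinear elliptic equation whose principal part is $\det(\nabla^2 h + hI)$, coupled with an algebraic relation between $u$ and $v=\alpha_K(u)$.

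\textbf{Geometric flow.} Following the approach announced in the abstract, I would construct a parabolic flow of the form $\partial_t X = \Phi(\nu, X)\,\nu$ on convex hypersurfaces whose stationary solutions are precisely the convex bodies solving the Gauss image equation. The speed $\Phi$ should combine the Gauss curvature of the evolving hypersurface (which produces the Monge-Amp\`ere structure), the density $f$ evaluated at the outer normal $\nu$, and $g$ evaluated at the radial direction $X/|X|$, together with a global normalizing factor that preserves a suitable dual Orlicz-type functional. That functional, monotone in time and with critical points coinciding with solutions of the Gauss image equation, is what will drive the convergence argument as $t \to \infty$.

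\textbf{A priori estimates.} The heart of the argument is uniform $C^0$, $C^1$, $C^2$ bounds for the evolving support function $h(\cdot, t)$. The $C^0$ bound is where the Aleksandrov related hypothesis is essential: if $K$ degenerated by collapsing toward a lower-dimensional subsphere, the radial Gauss image would concentrate near some polar set $\omega^*$, forcing the strict inequality \eqref{Ar} to be violated in the limit; this obstruction produces two-sided bounds on $h$, hence keeps the origin uniformly in the interior. The $C^1$ estimate then follows from convexity together with the $C^0$ bound. I expect the $C^2$ estimate — two-sided control of $h_{ij} + h\delta_{ij}$, equivalently of the principal radii — to be the main technical obstacle, and would attack it by the maximum principle applied to auxiliary quantities of the form $\log\det(h_{ij} + h\delta_{ij}) + A h - B|\nabla h|^2$ for an upper bound on the principal radii, together with a dual argument for the lower bound, both exploiting the parabolic equation satisfied by $h$.

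\textbf{Convergence and approximation.} With uniform estimates in hand, standard parabolic theory yields long-time existence, and monotonicity of the dual Orlicz functional gives subsequential convergence to a smooth stationary body $K_\infty$ solving the smooth Gauss image equation. To recover Theorem~\ref{thm0} in its stated generality, I would approximate $\mu$ and the density of $\lambda$ by smooth positive densities chosen so that \eqref{Ar} holds with a uniform strict gap, apply the smooth existence to obtain bodies $K_j$, and pass to the limit via Blaschke selection together with weak continuity of $\alpha_{K}(\cdot)$ on convex bodies with uniformly controlled inradius and circumradius. The persistent obstacles throughout are the $C^2$ estimates and the preservation of strict positivity of $h$ along the flow, both of which trace back to the full strength of the Aleksandrov related condition \eqref{Ar}.
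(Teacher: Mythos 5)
The paper does not actually prove Theorem~\ref{thm0}: it is quoted as an external result from B\"or\"oczky--Lutwak--Yang--Zhang--Zhao \cite{B2020}, where it is obtained for general Borel (sub)measures by a variational maximization argument, not by a flow. So your plan is not the paper's route, and -- more importantly -- it runs into a circularity with the paper's own flow machinery.

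The smooth existence you propose as a first step is exactly Theorem~\ref{thm1}, and the paper obtains it from the flow \eqref{flow}; but look at where the crucial $C^{0}$ bound comes from. In Lemma~\ref{lem04} the authors invoke Theorem~\ref{thm0} itself to produce a generalized solution $N^{*}$, take its polar $N$, and use the dilates $s_{0}N\subset K_{0}\subset s_{1}N$ as stationary barriers for the flow together with the comparison principle for generalized \MA solutions. Thus the only $C^{0}$ estimate available in the general (non-even) case already presupposes Theorem~\ref{thm0}. The paper's alternative argument, Lemma~\ref{lem04-}, does avoid Theorem~\ref{thm0} by exploiting the monotone functional $J(t)$ and the conserved quantity $V_{g}$, but it needs $f,g$ even and $M_{0}$ origin-symmetric; the symmetry enters through the inequality \eqref{eq:26}, and there is no obvious substitute without it. Consequently, your ``flow then approximate'' scheme is circular unless you first supply a $C^{0}$ estimate for the flow that is driven directly by the Aleksandrov condition \eqref{Ar} without passing through Theorem~\ref{thm0}. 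Your heuristic -- that a collapsing body would force $\alpha_{K}$ to concentrate mass near some polar set $\omega^{*}$ and violate \eqref{Ar} -- is the correct geometric idea, but making it quantitative is precisely the hard entropy-type estimate of the variational proof in \cite{B2020}; once you have carried that out, you have essentially reproved Theorem~\ref{thm0} variationally and the flow is no longer doing the work for the statement in question. Also note that the final approximation step is not free: you must verify that smoothed densities remain Aleksandrov related with a gap uniform in the smoothing parameter, and that $K\mapsto\lambda(\alpha_{K}(\cdot))$ is weakly continuous along a Blaschke-convergent sequence with uniformly bounded inradius and circumradius.
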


Note that for the special case in which $\mu$ is a measure that has
a density with repect to the spherical Lebesgue measure, say $f$, and $\lambda$ is a measure
that has a density with repect to the spherical Lebesgue measure,
say $g$. In this case, $\mu$ and $\lambda$ on $\uS$ are
Aleksandrov related if
\begin{equation}\label{cond1}
\int_{\uS}f=\int_{\uS}g>\int_{\omega}f+\int_{\omega^*}g
\end{equation}
for any compact, spherically convex set $\omega \in \uS$. Moreover,
the geometric problem \eqref{GIP} is the equation of \MA
type
\begin{equation} \label{dOMP-f}
g\bigg(\frac{\nabla h+hx}{|\nabla h+hx|}\bigg) |\nabla h+h x|^{-n}h
\det(\nabla^2h +hI)=f \quad \text{ on } \quad \uS,
\end{equation}
where $h$ is the support function of the polar body $K^*$ of $K$
which is defined as
\begin{equation*}
K^*=\{x \in \mathbb{R}^n: \langle x, y\rangle\leq 1 \quad \text{for
all} \quad y \in K\}.
\end{equation*}
Here $\nabla$ is the covariant derivative with respect to an
orthonormal frame on $\uS$, $I$ is the unit matrix of order $n-1$,
and $\nabla h(x) +h(x) x$ is just the point on $\partial K^*$ whose
outer unit normal vector is $x\in\uS$.

In this paper we study the existence of smooth solutions to the
equation \eqref{dOMP-f}. We obtain the following existence result.
\begin{theorem}\label{thm1}
Suppose that $f$ and $g$ are two positive smooth functions on $\uS$
. If $f$ and $g$ satisfy the condition \eqref{cond1}, then there
exists a smooth solution to the equation \eqref{dOMP-f}.
\end{theorem}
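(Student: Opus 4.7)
The plan is to construct the desired smooth solution as the long-time limit of a Gauss-curvature type flow whose stationary equation is precisely \eqref{dOMP-f}. Starting from $h(\cdot,0)$ the support function of a smooth, uniformly convex body (say, a round ball), I would consider the evolution on $\uS$
\begin{equation*}
\partial_t h(x,t) = \eta(t)\,f(x) - g\Bigl(\tfrac{\nabla h + h x}{|\nabla h + h x|}\Bigr)\,|\nabla h + h x|^{-n}\, h\, \det(\nabla^2 h + h I),
\end{equation*}
where the scalar $\eta(t)$ is a Lagrange multiplier chosen so that a suitable global functional of the evolving body (for instance a dual mixed volume, or an entropy of the form $\int_{\uS} f(x)\log h(x,t)\,d\sigma$) is preserved along the flow. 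Short-time existence for this fully nonlinear parabolic PDE on $\uS$ is standard as long as $h>0$ and $\nabla^2 h + h I >0$, and both conditions will persist so long as \emph{a priori} estimates hold.

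The crucial step is uniform positive upper and lower bounds on $h(\cdot,t)$. The upper bound on $h$ follows from a Tso-type barrier argument applied to the evolution equation. The positive lower bound is the main obstacle and is where the hypothesis \eqref{cond1} enters essentially: if $\min_{\uS} h(\cdot,t_j)\to 0$ along a subsequence, then the body dual to $h(\cdot,t_j)$ degenerates, and by a blow-down argument one extracts a spherically convex set $\omega\subset\uS$ supporting the limit direction, for which the strict inequality
\[
\int_{\uS} f \;>\; \int_{\omega} f + \int_{\omega^{*}} g
\]
contradicts the conservation of the chosen functional. This parallels the $C^0$ estimates used in the dual Orlicz--Minkowski problem and in \cite{B2020}.

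Once $0 < c \le h \le C < \infty$ is established, the $C^2$ estimate follows the pattern of Chow, Tso, and Andrews: one applies the maximum principle to $\partial_t h$ and to $\TR(\nabla^2 h + h I)$ to obtain two-sided positive bounds on the principal radii of curvature of the evolving polar body. The equation then becomes uniformly parabolic on $\uS$, and Krylov--Safonov combined with parabolic Schauder theory yields uniform $C^{k,\alpha}$ bounds for every $k\ge 0$.

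For convergence, the $C^{\infty}$ compactness permits extracting $t_j\to\infty$ with $h(\cdot,t_j)\to h_\infty$ smoothly, where $h_\infty>0$ and $\nabla^2 h_\infty + h_\infty I > 0$. The monotonicity of the chosen entropy functional along the flow forces $h_\infty$ to be stationary; combined with the preserved ``mass'' and \eqref{cond1} this pins down $\eta(t_j)\to 1$, so that $h_\infty$ solves \eqref{dOMP-f}. The principal analytic obstacle throughout is the $C^0$ lower bound: once the Aleksandrov relatedness condition \eqref{cond1} is packaged into an entropy-monotonicity argument on a judiciously chosen functional, the remaining $C^2$ and higher regularity estimates follow well-established paths for fully nonlinear curvature flows on the sphere.
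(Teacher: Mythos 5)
Your overall architecture is the right one and matches the paper in broad strokes: a Gauss-curvature--type flow whose stationary points solve \eqref{dOMP-f}, an entropy functional $\int_{\uS} f \log h$ minus $\int_{\uS} g \log \rho$ that is monotone, a conserved quantity guaranteeing non-degeneration, $C^0$ bounds, then $C^2$ bounds via maximum principle, then Krylov--Safonov and Schauder, then passage to the limit. You also correctly identify the $C^0$ lower bound as the crux. However, both the normalization and, more importantly, the argument for the $C^0$ lower bound diverge from the paper's and the latter has a genuine gap.

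On the normalization: the paper uses the flow $\partial_t h = -\frac{f(x)\rho^n(u)}{g(u)}\mathcal{K} + h$, which has no external Lagrange multiplier. The additive $+h$ term, together with the global constraint $\int_{\uS} f = \int_{\uS} g$ from \eqref{cond1}, makes $V_g(M_t)=\int_{\uS} g \log\rho\,du$ \emph{automatically} conserved (Lemma~\ref{lem03}) while $J(t)=\int_{\uS} f\log h\,dx - \int_{\uS} g\log\rho\,du$ decreases (Lemma~\ref{lem02}). Your $\eta(t)$ version would require extra work to pin its value in the limit; this is extra machinery rather than an error.

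The gap is in the $C^0$ lower bound for the general (non-even) case. You invoke ``a blow-down argument'' extracting a spherically convex $\omega$ and then contradicting \eqref{cond1} via conservation of a functional. This is essentially what the paper does, but \emph{only} in the origin-symmetric case (Lemma~\ref{lem04-}), where the upper bound on $h$ forces the degenerate limit to be contained in a hyperplane through the origin, so that $\rho_{\widetilde K}=0$ a.e.\ and $V_g$ plummets to $-\infty$, contradicting conservation. In the general case the limiting body can degenerate in more complicated ways (e.g.\ the origin sits on a lower-dimensional face at the boundary), and identifying the correct $\omega$ and $\omega^*$ and getting the strict inequality in \eqref{cond1} to bite is precisely the difficult point; your sketch does not supply the argument. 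The paper avoids this entirely: in Lemma~\ref{lem04} it invokes the already-known weak existence result (Theorem~\ref{thm0} from~\cite{B2020}) to produce a convex body $N$ that is a stationary solution in the generalized sense, rescales it to form inner and outer barriers $N_0 = s_0 N \subset K_0 \subset s_1 N = N_1$, and then runs a contact-point comparison argument using the comparison principle for generalized solutions of the Monge--Amp\`ere equation to trap $M_t$ between $N_0$ and $N_1$ for all time. So the paper's $C^0$ estimate is a barrier argument against a known weak solution, not a blow-down entropy argument. If you want to avoid \cite{B2020}'s weak existence theorem you would have to actually carry out the degeneration analysis in the non-even case, and as written your proposal does not do so.
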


The proof of Theorem \ref{thm1} is inspired by
\cite{LSW.JEMSJ.22-2020.893}, where the existence of smooth
solutions to the Aleksandrov and dual Minkowski problem was obtained
by studying a generalized Gauss curvature flow. In fact, the Gauss
curvature flow and its various generalizations have been extensively
studied by many scholars; see for example \cite{And.IMRN.1997.1001,
AGN.Adv.299-2016.174, BCD.Acta.219-2017.1, BIS.AP.12-2019.259,CHZ.MA.373-2019.953,
CW.AIHPANL.17-2000.733, GL.DMJ.75-1994.79, Ger.CVPDE.49-2014.471,
Ham.CAG.2-1994.155, Iva.JFA.271-2016.2133,
LL.TAMS.373-2020.5833, Urb.JDG.33-1991.91} and the references
therein.

Let $M_{0}$ be a smooth, closed, uniformly convex hypersurface in
$\R^{n}$, which contains the origin in its interiors and given by a
smooth embedding $X_{0}: \mathbb{S}^{n-1} \rightarrow \R^{n}$. We
consider a family of closed hypersurfaces $\set{M_t}$ given by
$M_t=X(\uS,t)$, where $X: \mathbb{S}^{n-1}\times[0,T) \rightarrow
\R^{n}$ is a smooth map satisfying the following initial value
problem:
\begin{equation}\label{flow}
\left\{
\begin{aligned}
    \frac{\pd X}{\pd t} (x,t)
    &= -\frac{f(\nu)}{g\Big(\frac{X}{|X|}\Big)}|X|^{n}\mathcal{K} \nu +  X,\\
    X(x,0) &= X_{0}(x).
  \end{aligned}
\right.
\end{equation}
Here $\nu$ is the unit outer normal vector of the hypersurface $M_t$
at the point $X(x,t)$, $\mathcal{K}$ is the Gauss curvature of
$M_{t}$ at $X(x,t)$, and $T$ is the maximal time for which the
solution exists. We obtain the long-time existence and convergence
of the flow \eqref{flow}.

\begin{theorem}\label{thm2}
Suppose $f$ and $g$ satisfy the assumptions of Theorem \ref{thm1}.
Let $M_{0}$ be a smooth, closed, uniformly convex hypersurface  in
$\R^{n}$, which contains the origin in its interior. Then, the flow
\eqref{flow} has a unique smooth solution, which exists for all time
$t>0$. Moreover, when $t\to\infty$, a subsequence of
$M_{t}=X(\uS,t)$ converges in $C^{\infty}$ to a smooth, closed,
uniformly convex hypersurface, whose support function is a smooth
solution to the equation \eqref{dOMP-f}.
\end{theorem}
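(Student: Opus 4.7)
The plan is to rewrite \eqref{flow} as a scalar parabolic Monge--Amp\`ere equation on $\uS$ for the support function $h(x,t)$ of the convex body $\Omega_t$ bounded by $M_t$. The tangential part of the velocity only reparametrizes $M_t$, so projecting onto the outer normal gives
\begin{equation*}
  \frac{\partial h}{\partial t}(x,t)
   = -\frac{f(x)\,|\nabla h+hx|^{n}}{g\!\left(\frac{\nabla h+hx}{|\nabla h+hx|}\right)\det(\nabla^{2}h+hI)} + h(x,t),
\end{equation*}
whose stationary solutions are exactly solutions of \eqref{dOMP-f}. Uniform convexity of $M_{0}$ gives $\nabla^{2}h_{0}+h_{0}I>0$, and standard fully-nonlinear parabolic theory yields a unique smooth solution on some maximal interval $[0,T)$.

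I would next identify the variational quantities driving the flow. Writing $\rho(u,t)$ for the radial function of $\Omega_t$ and using the radial Gauss-map Jacobian identity $|X|^n\mathcal{K}\,du = h\,dx$ together with the mass balance $\int_{\uS} f = \int_{\uS} g$ from \eqref{cond1}, a direct computation shows
\begin{equation*}
  \mathcal{F}(t) = \int_{\uS} g(u)\log\rho(u,t)\,du \qquad\text{is conserved,}
\end{equation*}
while for the entropy $\mathcal{E}(t) = \int_{\uS} f(x)\log h(x,t)\,dx$ one finds
\begin{equation*}
  \mathcal{E}'(t) = \int_{\uS} f\,dx - \int_{\uS} \frac{f^{2}|X|^{n}\mathcal{K}}{g\,h}\,dx \le 0
\end{equation*}
by Cauchy--Schwarz applied with weight $gh/(|X|^n\mathcal{K})$, with equality if and only if $f|X|^{n}\mathcal{K}/(gh)$ is constant on $\uS$.

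With $\mathcal{F}$ conserved and $\mathcal{E}$ monotone, I would derive uniform $C^{0}$ bounds $c\le h\le C$ independent of $t$ by exploiting the strict Aleksandrov-relatedness inequality \eqref{cond1}: heuristically, if $h$ collapsed to $0$ on a spherically convex $\omega^{*}$ (equivalently $\rho$ blew up on the dual cap), boundedness of $\mathcal{E}$ together with conservation of $\mathcal{F}$ would, in the limit, force an equality case of \eqref{cond1}, contradicting its strict form. Convexity reduces $C^{1}$ control to $C^{0}$ via $|\nabla h|\le C h$, and a maximum-principle argument on the largest and smallest eigenvalues of $\nabla^{2}h+hI$ yields uniform upper and lower bounds on the principal radii, hence uniform parabolicity. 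Krylov--Safonov and Schauder estimates then give $C^{k,\alpha}$ bounds for every $k$, so the solution extends to $[0,\infty)$.

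For the convergence statement, monotonicity and boundedness of $\mathcal{E}$ force $\mathcal{E}'(t_{k})\to 0$ along some $t_{k}\to\infty$; by the uniform estimates and Arzel\`a--Ascoli, after passing to a subsequence $h(\cdot,t_{k})\to h_{\infty}$ in $C^{\infty}$ with $\nabla^{2}h_{\infty}+h_{\infty}I>0$. The Cauchy--Schwarz equality case gives $f|\nabla h_{\infty}+h_{\infty}x|^{n}\mathcal{K}_{\infty}/(g h_{\infty})\equiv c$ for some constant $c$, and substituting back into $\mathcal{E}'=0$ yields $\int f\,dx = c\int f\,dx$, so $c=1$ and $h_{\infty}$ solves \eqref{dOMP-f}. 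The step I expect to be the main obstacle is the uniform $C^{0}$ estimate: $\mathcal{E}$ and $\mathcal{F}$ only control certain logarithmic averages of $h$ and $\rho$, and converting the strict inequality in \eqref{cond1} into a quantitative barrier preventing degeneration on every spherically convex set is the delicate part where the hypothesis must be used in full strength, paralleling the role played by an analogous hypothesis in \cite{LSW.JEMSJ.22-2020.893}.
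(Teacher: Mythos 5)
Your identification of the variational structure is correct and matches the paper: the quantity $\mathcal{F}(t)=\int_{\uS}g\log\rho\,du$ is conserved (Lemma~\ref{lem03}) and the functional $J(t)=\int_{\uS}f\log h\,dx-\int_{\uS}g\log\rho\,du$ (equivalently, by conservation, your $\mathcal{E}$) is non-increasing (Lemma~\ref{lem02}); the paper writes $J'(t)=-\int_{\uS}\frac{(gh-f\mathcal{K}\rho^n)^2}{gh\mathcal{K}\rho^n}\,dx$ rather than invoking Cauchy--Schwarz, but these are equivalent. The derivation of the scalar equation for $h$, the reduction of $C^1$ to $C^0$ via convexity, the $C^2$ estimates by maximum principle, the Krylov--Safonov/Schauder bootstrap, and the subsequential-convergence argument via $J'(t_k)\to0$ are all as in the paper.

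The genuine gap is exactly where you anticipated it: the uniform $C^0$ bound. For the general (non-symmetric) case you propose a degeneracy argument from strict Aleksandrov-relatedness, but the paper does \emph{not} prove Lemma~\ref{lem04} this way. Instead, it invokes the weak existence result of B\"or\"oczky--Lutwak--Yang--Zhang--Zhao (Theorem~\ref{thm0}) as a black box to produce a generalized stationary solution $N$, chooses dilates $s_0N\subset K_0\subset s_1N$, and shows $s_0N\subset K_t\subset s_1N$ for all $t$ by a first-touching-time comparison: at a first contact with $s_1N$, after a small perturbation of the outer barrier one forces $\partial_t\rho>0$ near the touching set, which via the flow equation makes $\mathcal{K}(M_{t_0})$ strictly smaller than $\mathcal{K}((1-\epsilon)s_1N)$ there, contradicting the Monge--Amp\`ere comparison principle for generalized solutions. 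The argument you sketch---collapse of $h$ plus conservation of $\mathcal{F}$ and boundedness of $\mathcal{E}$ forcing equality in \eqref{cond1}---is essentially what the paper uses in the \emph{origin-symmetric} case (Lemma~\ref{lem04-}), where the crucial inequality $h(x,t)\ge|\langle x,x_t\rangle|\,h(x_t,t)$ ties the maximum of $h$ to its behaviour at all directions. In the non-symmetric case no such pointwise estimate is available, a bounded $J$ does not by itself bound $\max h$, and the body can simultaneously elongate in one cone and flatten elsewhere; turning the strict inequality \eqref{cond1} into a quantitative barrier against all such degenerations is much more work than the symmetric argument and is not carried out in the paper. So as written your plan has a hole precisely at the step you flagged; the paper's fix is to import Theorem~\ref{thm0} and run a barrier argument rather than a variational-degeneracy argument. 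You should also be careful with the phrase ``$h$ collapsed to $0$ on $\omega^*$ (equivalently $\rho$ blew up on the dual cap)'': collapse of $h$ and blow-up of $\rho$ are dual under polarity, not equivalent descriptions of the same degeneration of $K_t$, and the two must be excluded separately.
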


This paper is organized as follows. In section 2, we give some basic
knowledge about convex hypersurfaces and the flow \eqref{flow}. In
section 3, more properties of the flow \eqref{flow} will be proved,
based on which we can obtain the uniform lower and upper bounds of
support functions of $\set{M_t}$ via delicate analyses. In the last
section, the long-time existence and convergence of the flow
\eqref{flow} will be proved, which completes the proofs of our
theorems.

\section{Preliminaries}

\subsection{Basic properties of convex hypersurfaces}

We first recall some basic properties of convex hypersurfaces in
$\R^n$; see \cite{Urb.JDG.33-1991.91} for details.
Let $M$ be a smooth, closed, uniformly convex hypersurface in $\R^n$ enclosing
the origin.
The support function $h$ of $M$ is defined as
\begin{equation} \label{h0}
h(x) := \max_{y\in M} \langle y,x \rangle, \quad \forall\, x\in\uS,
\end{equation}
where $\langle \cdot,\cdot \rangle$ is the standard inner product in $\R^n$.

The convex hypersurface $M$ can be recovered by its support function $h$.
In fact, writing the Gauss map of $M$ as $\nu_M$, we parametrize $M$ by
$X : \uS\to M$ which is given as
\begin{equation*}
X(x) =\nu_M^{-1}(x), \quad \forall\,x\in\uS.
\end{equation*}
Note that $x$ is the unit outer normal vector of $M$ at $X(x)$.
On the other hand, one can easily check that the maximum in the definition
\eqref{h0} is attained at $y=\nu_M^{-1}(x)$, namely
\begin{equation} \label{h}
h(x) = \langle x, X(x)\rangle, \quad\forall\, x \in
\mathbb{S}^{n-1}.
\end{equation}
Let $e_{ij}$ be the standard metric of the unit sphere
$\mathbb{S}^{n-1}$, and $\nabla$ be the corresponding connection on
$\mathbb{S}^{n-1}$. Then, it is easy to check that
\begin{equation}\label{Xh}
  X(x) = \nabla h(x) + h(x)x, \quad \forall\,x\in\uS.
\end{equation}

By differentiating \eqref{h} twice, the second fundamental form $A_{ij}$ of $M$
can be also computed in terms of the support function:
\begin{equation} \label{A}
  A_{ij} =  \nabla_i\nabla_{j}h + he_{ij},
\end{equation}
where $\nabla_{i}\nabla_j$ denotes the second order covariant
derivative with respect to $e_{ij}$. The induced metric matrix
$g_{ij}$ of $M$ can be derived by Weingarten's formula:
\begin{equation} \label{g}
  e_{ij} = \langle \nabla_{i}x, \nabla_{j}x\rangle  = A_{ik}A_{lj}g^{kl}.
\end{equation}
The principal radii of curvature are eigenvalues of the matrix $b_{ij} =
A^{ik}g_{jk}$.
When considering a smooth local orthonormal frame on $\uS$, by virtue of
\eqref{A} and \eqref{g}, we have
\begin{equation} \label{bij}
  b_{ij} = A_{ij} = \nabla_{i}\nabla_jh + h\delta_{ij}.
\end{equation}
In particular, the Gauss curvature of $M$ at $X(x)$ is given by
\begin{equation*}
\mathcal{K}(x) = [\det(\nabla_{i}\nabla_jh + h\delta_{ij})]^{-1}.
\end{equation*}
We shall use $b^{ij}$ to denote the inverse matrix of $b_{ij}$.

The radial function $\rho$ of the convex hypersurface $M$ is defined as
\begin{equation*}
\rho(u) :=\max\set{\lambda>0 : \lambda u\in M}, \quad\forall\, u\in\uS.
\end{equation*}
Note that $\rho(u)u\in M$.
The Gauss map $\nu_M$ can be computed as
\begin{equation*}
  \nu_M(\rho(u)u) = \frac{\rho(u)u - \nabla \rho}{\sqrt{\rho^{2}+|\nabla \rho|^{2}}}.
\end{equation*}
If we connect $u$ and $x$ through the following equality:
\begin{equation}\label{eq:8}
\rho(u)u =X(x) =\nabla h(x) +h(x)x =\delbar h(x),
\end{equation}
where $\overline{\nabla}$ is the standard connection of
$\mathbb{R}^n$, then we have the following relations
\begin{equation}\label{eq:9}
x= \frac{\rho(u)u - \nabla \rho}{\sqrt{\rho^{2}+|\nabla
\rho|^{2}}},\quad  u= \frac{\nabla h +h(x)x}{\sqrt{|\nabla h|^2
+h^2}},
\end{equation}
and
\begin{equation}\label{xu}
\frac{h(x)}{\mathcal{K}(x)}dx=\rho^n(u)du.
\end{equation}

\subsection{Geometric flow and its associated functional}

Recalling the evolution equation of $X(x,t)$ in the geometric flow \eqref{flow},
and using similar computations as in \cite{Urb.JDG.33-1991.91}, we obtain the
evolution equation of the corresponding support function $h(x,t)$:
\begin{equation}\label{ht}
  \frac{\pd h}{\pd t} (x,t)
  = -\frac{f(x)\rho^n(u)}{g(u)} \mathcal{K}(x,t) +  h(x,t)
  \ \text{ in }\ \uS\times(0,T).
\end{equation}
Since $M_t$ can be recovered by $h(\cdot,t)$, the flow \eqref{ht} is equivalent
to the original flow \eqref{flow}.

Denote the radial function of $M_t$ by $\rho(u,t)$. For any $t$, let
$u$ and $x$ be related through the following equality:
\begin{equation*}
\rho(u,t)u =\delbar h(x,t) =\nabla h(x,t) +h(x,t)x.
\end{equation*}
Therefore, $x$ can be expressed as $x=x(u,t)$. By a direction
computation (see \cite{CHZ.MA.373-2019.953}), we have
\begin{equation}\label{eq:4}
  \frac{1}{\rho(u,t)}\pd_t\rho(u,t)
  =\frac{1}{h(x,t)}\pd_th(x,t).
\end{equation}
Now by virtue of \eqref{ht} and \eqref{eq:4}, we obtain the
evolution equation of $\rho(u,t)$:
\begin{equation}\label{rhot}
  \frac{\pd \rho}{\pd t} (u,t)
  = -\frac{f(x)\rho^{n+1}(u,t)}{g(u)h(x,t)} \mathcal{K}(x,t) +  \rho(u,t)
  \ \text{ in }\ \uS\times(0,T),
\end{equation}
where $x=x(u,t)$ is the unit outer normal vector of $M_t$ at the point $\rho(u,t)u$.

Consider the following functional:
\begin{equation}\label{Jt}
J(t)=\int_{\uS}f(x)\log h(x, t)  \dd x-\int_{\uS}g(u)\log \rho(u, t)
\dd u, \quad t\geq 0,
\end{equation}
which will turn out to be monotonic along the flow \eqref{ht}.

\begin{lemma}\label{lem02}
$J(t)$ is non-increasing along the flow \eqref{ht}. Namely $\frac{d}{dt}J(t)\leq 0$, and the equality holds if and only if $M_t$
satisfies the elliptic
equation \eqref{dOMP-f}.
\end{lemma}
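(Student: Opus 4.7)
My plan is to differentiate $J(t)$ directly, eliminate the time derivatives using the flow equations \eqref{ht} and \eqref{rhot}, unify the two integrals via the change of variables \eqref{xu} between $x\in\uS$ (outer normal) and $u\in\uS$ (radial direction), and finally apply the AM--GM inequality pointwise. So I would begin by writing
\[
\frac{d}{dt}J(t) = \int_{\uS} f(x)\,\frac{\pd_t h(x,t)}{h(x,t)}\,\dd x \;-\; \int_{\uS} g(u)\,\frac{\pd_t \rho(u,t)}{\rho(u,t)}\,\dd u
\]
and then substitute $\pd_t h/h = -\frac{f(x)\rho^n\mathcal{K}}{g(u)\,h}+1$ from \eqref{ht} and $\pd_t \rho/\rho = -\frac{f(x)\rho^n\mathcal{K}}{g(u)\,h}+1$ from \eqref{rhot} (the equality of these two expressions is exactly \eqref{eq:4}, which is a reassuring consistency check).

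Next, I would convert the $u$-integral into an $x$-integral using \eqref{xu}, i.e.\ $\dd u = \frac{h(x)}{\rho^n(u)\mathcal{K}(x)}\,\dd x$. After this substitution, the second integral becomes
\[
\int_{\uS}\Bigl[-f(x) + \frac{g(u)\,h(x)}{\rho^n(u)\,\mathcal{K}(x)}\Bigr]\dd x,
\]
so combining everything yields
\[
\frac{d}{dt}J(t) = \int_{\uS}\Bigl[\,2f(x) - \frac{f^2(x)\,\rho^n(u)\,\mathcal{K}(x)}{g(u)\,h(x)} - \frac{g(u)\,h(x)}{\rho^n(u)\,\mathcal{K}(x)}\Bigr]\dd x.
\]
All quantities in the two fractions are strictly positive, so the AM--GM inequality applied to $a = \frac{f^2\rho^n\mathcal{K}}{gh}$ and $b = \frac{gh}{\rho^n\mathcal{K}}$ gives $a+b \ge 2\sqrt{ab} = 2f$, forcing the integrand to be nonpositive and hence $\frac{d}{dt}J(t)\le 0$.

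For the equality case, I would note that AM--GM is an equality precisely when $a=b$, which simplifies to $f(x)\rho^n(u)\mathcal{K}(x) = g(u)h(x)$ at every $x\in\uS$. Recalling that $\mathcal{K}=[\det(\nabla^2 h + hI)]^{-1}$, $\rho = |\nabla h + hx|$, and $u = (\nabla h + hx)/|\nabla h + hx|$, this identity is literally \eqref{dOMP-f}. Conversely, if $M_t$ satisfies \eqref{dOMP-f}, the pointwise identity holds and the derivative vanishes. The only slightly delicate step is the change of variables, but since the flow keeps $M_t$ smooth and uniformly convex on its interval of existence, the radial Gauss map is a diffeomorphism and \eqref{xu} applies without fuss; the algebra then cleans up exactly, which I expect to be the only place where a sign or exponent error could derail the calculation.
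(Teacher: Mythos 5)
Your proposal is correct and follows essentially the same path as the paper: differentiate $J$, use \eqref{eq:4} and the change of variables \eqref{xu} to write everything as a single $dx$-integral, and substitute the flow equation \eqref{ht}. The only cosmetic difference is that the paper factors the integrand directly into the perfect square $-\frac{(gh-f\mathcal{K}\rho^{n})^{2}}{gh\mathcal{K}\rho^{n}}$, whereas you expand it to $2f - \frac{f^{2}\rho^{n}\mathcal{K}}{gh} - \frac{gh}{\rho^{n}\mathcal{K}}$ and invoke AM--GM; the two forms are algebraically identical and the equality case $gh=f\mathcal{K}\rho^{n}$ is the same.
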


\begin{proof}
Using \eqref{xu} and \eqref{eq:4}, we have
\begin{eqnarray}\label{eq:23}
\frac{d}{d t}J(t) &=&\int_{\uS} \frac{f \pd_t
h}{h}dx-\int_{\uS}\frac{g \pd_t\rho}{\rho}du\\ \nonumber &=&
\int_{\uS} \frac{\pd_t h}{h} (f-\frac{gh}{\mathcal{K}\rho^n})dx\\
\nonumber &=& \int_{\uS}
\frac{1}{h}(f-\frac{gh}{\mathcal{K}\rho^n})(h-\frac{f\mathcal{K}\rho^n}{g})
dx\\ \nonumber &=& -\int_{\uS}
\frac{1}{gh\mathcal{K}\rho^n}(gh-f\mathcal{K}\rho^n)^2 dx \\
\nonumber &\leq&0.
\end{eqnarray}
Clearly $\frac{d}{dt}J(t)=0$ if and only if
$$gh=f\mathcal{K}\rho^n.$$
Namely $M_t$ satisfies \eqref{dOMP-f}. This completes the proof.
\end{proof}

\begin{lemma}\label{lem03}
Assume that
\begin{equation*}
\int_{\uS} f(x)dx=\int_{\uS}g(u)du,
\end{equation*}
then the log-volume of $M_t$
\begin{equation}\label{log-V}
V_g(M_t)=\int_{\uS}g(u)\log \rho(u, t)
\dd u,
\end{equation}
remain unchanged under the flow \eqref{ht}.
\end{lemma}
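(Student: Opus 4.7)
The plan is to differentiate $V_g(M_t)$ under the integral sign and reduce everything to an equality between integrals of $f$ and $g$ so that the hypothesis closes the argument. Concretely, I would first write
\begin{equation*}
\frac{d}{dt} V_g(M_t) = \int_{\uS} g(u)\,\frac{\partial_t \rho(u,t)}{\rho(u,t)}\dd u,
\end{equation*}
and then use the identity \eqref{eq:4} to replace $\partial_t\rho/\rho$ by $\partial_t h/h$ (where $x$ and $u$ are coupled through $\rho(u,t)u = \delbar h(x,t)$). This transfers the integrand onto the $x$-side modulo a change of variables.

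Next, I would use the change-of-variables formula \eqref{xu}, which reads $\frac{h(x,t)}{\mathcal{K}(x,t)}\dd x = \rho^{n}(u,t)\dd u$, to rewrite
\begin{equation*}
\int_{\uS} g(u)\,\frac{\partial_t h(x,t)}{h(x,t)}\dd u
= \int_{\uS} \frac{g(u)}{\mathcal{K}(x,t)\,\rho^{n}(u,t)}\,\partial_t h(x,t)\dd x.
\end{equation*}
Substituting the evolution equation \eqref{ht}, namely $\partial_t h = -\frac{f(x)\rho^{n}}{g(u)}\mathcal{K} + h$, the integrand collapses to $-f(x) + \frac{g(u)\,h(x,t)}{\mathcal{K}(x,t)\,\rho^{n}(u,t)}$. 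Applying \eqref{xu} in the reverse direction to the second term returns it to a pure $u$-integral of $g(u)$, so
\begin{equation*}
\frac{d}{dt} V_g(M_t) = -\int_{\uS} f(x)\dd x + \int_{\uS} g(u)\dd u = 0,
\end{equation*}
by hypothesis.

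Since the argument is essentially a direct computation that chains together \eqref{eq:4}, \eqref{xu}, and \eqref{ht}, there is no real obstacle; the only thing to be a little careful about is keeping track of which variable ($u$ or $x$) each factor depends on when performing the change of variables, so that both applications of \eqref{xu} are performed consistently. This is the same type of cancellation that drove the computation in \eqref{eq:23} of Lemma \ref{lem02}, and it is the reason the functional $J(t)$ was set up as the difference of the $h$-integral and the log-volume.
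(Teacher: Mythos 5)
Your proposal is correct and is essentially the paper's proof, merely reordered: the paper substitutes the evolution equation for $\rho$ (equation \eqref{rhot}, itself obtained from \eqref{ht} and \eqref{eq:4}) into $\int g\,\partial_t\rho/\rho\,\dd u$ and then changes variables only on the $f$-term via \eqref{xu}, whereas you change variables first and then substitute \eqref{ht}. The ingredients (\eqref{ht}/\eqref{rhot}, \eqref{eq:4}, \eqref{xu}) and the final cancellation are identical.
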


\begin{proof}
Using \eqref{xu} and \eqref{rhot}, we have
\begin{eqnarray*}
\frac{d}{dt}V_g(M_t) &=& \int_{\uS}\frac{g\pd_t \rho}{\rho}du\\ \nonumber
&=& \int_{\uS}\frac{g}{\rho}(\rho-\frac{f\rho^{n+1}}{gh}\mathcal{K})du\\ \nonumber
&=& \int_{\uS} g(u)du-\int_{\uS}f(x)\frac{\rho^{n}\mathcal{K}}{h}du\\ \nonumber
&=& \int_{\uS} g(u)du-\int_{\uS} f(x)dx\\ \nonumber
&=&0.
\end{eqnarray*}
So, we complete the proof.
\end{proof}

\section{Uniform bounds of support functions}

In this section, we will derive uniformly positive lower and upper
bounds of support functions along the flow \eqref{flow}. Our idea
comes from the proof of Lemma 3.2 in \cite{LSW.JEMSJ.22-2020.893}.

\begin{lemma}\label{lem04}
Suppose $f$ and $g$ satisfy the assumptions in Theorem \ref{thm1}.
Let $X(\cdot, t)$ be a strictly convex solution to the flow
\eqref{flow} which encloses the origin for $t \in [0,T) $, then
exists a positive constant $C$ depending only on $M_0$, $f$ and $g$,
such that for every $t\in[0,T)$,
\begin{equation}\label{h1}
1/C \leq h(\cdot,t) \leq C \quad \text{ on } \ \uS,
\end{equation}
and
\begin{equation}\label{rho1}
1/C \leq \rho(\cdot,t) \leq C \quad \text{ on } \ \uS.
\end{equation}
\end{lemma}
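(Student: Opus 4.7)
The plan is to combine the two conservation and monotonicity properties established in Lemmas~\ref{lem02} and~\ref{lem03}. Since $V_g(M_t)=\int_\uS g(u)\log\rho(u,t)\,du$ is constant in $t$ while $J(t)=\int_\uS f\log h\,dx-V_g(M_t)$ is non-increasing, the quantity $\int_\uS f(x)\log h(x,t)\,dx$ is uniformly bounded above by $J(0)+V_g(M_0)$ on $[0,T)$. Both bounds in \eqref{h1}--\eqref{rho1} will be extracted from these two integral estimates, together with the elementary convex-geometric identities $\max_\uS h(\cdot,t)=\max_\uS \rho(\cdot,t)=:R(t)$ and $\min_\uS h(\cdot,t)=\min_\uS \rho(\cdot,t)=:r(t)$, equal respectively to the farthest and the nearest distances from the origin to $M_t$.

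For the upper bound on $R$, I argue by contradiction. If $R(t_k)\to\infty$ along a subsequence, with the maximum of $\rho(\cdot,t_k)$ attained at $u_k\to u_0$, then $R(t_k)u_k\in K_{t_k}$, and convexity together with $0\in K_{t_k}$ forces $[0,R(t_k)u_k]\subset K_{t_k}$. Passing to support functions gives $h(x,t_k)\geq R(t_k)\langle u_k,x\rangle$ whenever the inner product is positive, hence $h(x,t_k)\geq R(t_k)/3$ for $k$ large on the spherical cap $\Omega_+:=\{x:\langle x,u_0\rangle\geq 1/2\}$. The positivity and continuity of $f$ give $\inf_{u\in\uS}\int_{\{\langle x,u\rangle\geq 1/2\}}f\,dx=:c_0>0$, so this pointwise lower bound on $h$, combined with $\int_\uS f\log h\leq C_0$, should force $\log R(t_k)$ to stay bounded once the contribution of $\int_{\uS\setminus\Omega_+}f\log h$ is controlled from below.

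For the lower bound on $r$ I would use the dual squeeze estimate: if $r(t)=\rho(v_0,t)$ is the minimum, then the tangent hyperplane to $K_t$ at the closest point $r(t)v_0$ is perpendicular to $v_0$, hence $K_t\subset\{y:\langle y,v_0\rangle\leq r(t)\}$. This yields $\rho(u,t)\leq r(t)/\langle u,v_0\rangle$ whenever $\langle u,v_0\rangle>0$, and in particular $\rho\leq 2r(t)$ on the cap $\{\langle u,v_0\rangle\geq 1/2\}$. If $r(t_k)\to 0$ while $R(t_k)$ remains bounded, then $g\log\rho$ becomes uniformly large and negative on a cap of positive $g$-measure, forcing $V_g(M_{t_k})\to-\infty$ and contradicting Lemma~\ref{lem03}.

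The principal obstacle is the joint degeneration $R(t_k)\to\infty$ and $r(t_k)\to 0$ at compatible rates, where the naive estimates only combine to give a bound on $\log(R/r)$. Note that in this regime $u_0$ and $v_0$ are forced to be asymptotically orthogonal, since $R(t_k)u_k$ must lie in the half-space $\{y:\langle y,v_0\rangle\leq r(t_k)\}$, giving $\langle u_0,v_0\rangle\leq r(t_k)/R(t_k)\to 0$. Here I expect the Aleksandrov-related hypothesis \eqref{cond1} to enter essentially, through a spherically convex test set $\omega$ tailored to the asymptotic directions $u_0$ and $v_0$: the strict inequality in \eqref{cond1} should supply the positive coefficient needed to close the argument, along the lines of Lemma~3.2 in \cite{LSW.JEMSJ.22-2020.893}.
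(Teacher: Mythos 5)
Your proposal and the paper part ways at the crucial step. The paper's proof of Lemma~\ref{lem04} is a barrier argument: since $f$ and $g$ are Aleksandrov related, Theorem~\ref{thm0} provides a convex body $N^*$ solving the weak Gauss image problem; dilates $s_0N$ and $s_1N$ of its polar dual $N$ are stationary solutions of the flow sandwiching $M_0$, and the comparison principle for generalised Monge--Amp\`ere solutions (Theorem~1.4.6 in \cite{Gu}) then propagates the sandwiching $s_0N\subset K_t\subset s_1N$ forward in time. No variational quantity is used at all. Your route — combining the monotonicity of $J(t)$ with the conservation of $V_g(M_t)$ — is instead exactly the strategy the authors adopt only for the origin-symmetric case in Lemma~\ref{lem04-}, where the extra inequality $h(x,t)\geq |\langle x,x_t\rangle|\,h(x_t,t)$ holds globally and decouples $R(t)$ from $r(t)$.

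There is a genuine gap in your argument, and you yourself put your finger on it: the joint degeneration $R(t_k)\to\infty$, $r(t_k)\to 0$. Your upper-bound step requires a lower bound on $\int_{\uS\setminus\Omega_+} f\log h$, which in turn requires a lower bound on $h$ away from the cap — but that is precisely part of what must be proved. Splitting into ``$R\to\infty$ with $r$ bounded below'' and ``$r\to 0$ with $R$ bounded above'' is not legitimate when both can happen simultaneously at coupled rates, and in that regime your estimates only control $\log(R/r)$. The final paragraph, where the Aleksandrov condition \eqref{cond1} ``should supply the positive coefficient needed to close the argument,'' states an intention rather than a proof: you do not construct the spherically convex test set $\omega$, nor show how the strict inequality in \eqref{cond1} is transferred into a quantitative obstruction to the degeneration. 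This is the hard step, it is where the non-symmetric dual Minkowski problem estimates in \cite{LSW.JEMSJ.22-2020.893} become delicate, and the paper deliberately sidesteps it by invoking the pre-existing weak solution as a barrier. Without carrying out that step, the proposal does not establish the lemma in the generality claimed.
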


\begin{proof}
Since $f$ and $g$ satisfy the condition \eqref{cond1}, the measure
$\mu$ with the density $f$, and the measure $\lambda$ with the
density $g$ are Aleksandrov related. Then, using Theorem \ref{thm0},
there exists a body $N^*$ containing the origin in its interior
satisfies the equation \eqref{GIP}. Let $N$ be the polar dual of
$N^*$. Choosing the constants $s_1>s_0>0$ such that
\begin{equation*}
N_0=s_0 N\subset K_0\subset s_1 N=N_1.
\end{equation*}
Let $r_0$ and $r_1$ be respectively the radial functions of $N_0$
and $N_1$. Clearly, $sN$ is a stationary solution to \eqref{flow} in
the generalised sense. We firstly prove that
\begin{equation*}
K_t\subset N_1
\end{equation*}
for all $t>0$ by a contradiction. Otherwise, there exists a first
time $t_0>0$ such that
\begin{equation*}
\sup_{u\in\uS}\frac{\rho(u, t_0)}{r_1(u)}=1.
\end{equation*}
Set
\begin{equation*}
P=M_{t_0}\cap N_1,
\end{equation*}
which can be a point or a closed set. Clearly, the unit normal
vector of $M_{t_0}$ coincide with that of $N_1$ for any $p \in P$.
Namely, $\nu_{M_{t_0}}(p)=\nu_{N_1}(p)$ for any $p \in P$. Moreover,
replacing $r_1$ by $(1+a)r_1$ for a small constant $a$, we may
assume that
\begin{equation*}
\frac{\partial}{\partial t} \rho(u, t)>0 \quad \text{on} \quad
P\times \{t_0\}
\end{equation*}
and also in a neighbourhood of $P\times \{t_0\}$. There exists
sufficiently small constants $\epsilon, \delta>0$ such that
\begin{equation*}
\frac{\partial}{\partial t} \rho(u, t)>\delta
\end{equation*}
for all $u \in E=\{\xi \in \uS: \rho(\xi,
t)>(1-\epsilon)r_1(\xi)\}$. Since $\nu_{M_{t_0}}(p)=\nu_{N_1}(p)$
for any $p \in P$, making $\epsilon$ small again, we have
$\nu_{N_1}(u)\approx \nu_{M_{t_0}}(u)$ for $u \in E$. Thus, using
the equation \eqref{rhot}, the Gauss curvature of $M_{t_0}$
satisfies
\begin{eqnarray*}
\mathcal{K}(M_{t_0})&<&\frac{(\rho(u,
t_0)-\delta)g(u)}{f(\nu_{M_{t_0}})\rho^{n+1}(u,
t_0)}\\&<&\frac{(r_1(u, t_0)-\delta)g(u)}{f(\nu_{N_1})(r_1(u,
t_0)-\epsilon)^{n+1}}\\&<&\frac{1}{(1-\epsilon)^n}\frac{g(u)}{f(\nu_{N_1})(r_1(u,
t_0))^{n}}\\&<& \mathcal{K}((1-\epsilon)N_1).
\end{eqnarray*}
Namely, the Gauss curvature of $M_{t_0}$ is strictly small than that
of $(1-\epsilon)N_1$ for all $\xi \in E$. Applying the comparison
principle for generalised solutions to the elliptic \MA equation
(see Theorem 1.4.6 in \cite{Gu}) to the functions $\rho(u, t_0)$ and
$(1-\epsilon)r_1(u)$, we reach a contradiction. Similarly, we can
prove that $N_0\subset M_t$ for all $t>0$.
\end{proof}

From Theorems 2 and 3 in \cite{B2020}, we know that if $f$ and $g$
are even functions satisfying
\begin{equation}\label{even}
\int_{\uS} f(x)dx=\int_{\uS}g(u)du,
\end{equation}
then $f$ and $g$ satisfy the condition \eqref{cond1}. In this case,
if $M_0$ is origin-symmetric, we can give a proof of Lemma
\ref{lem04} without using Theorem \ref{thm0}.

\begin{lemma}\label{lem04-}
Suppose that $M_0$ is origin-symmetric, $f$ and $g$ are two smooth,
positive even functions satisfying the condition \eqref{even}, then
the conclusions in Lemma \ref{lem04} hold true.
\end{lemma}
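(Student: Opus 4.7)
The plan is to bypass Theorem \ref{thm0} and derive \eqref{h1}--\eqref{rho1} directly from the two identities established in Section 2: the monotonicity of $J(t)$ (Lemma \ref{lem02}) and the conservation of $V_g(M_t)$ (Lemma \ref{lem03}, whose hypothesis is exactly \eqref{even}). Evenness of $f$ and $g$ together with origin-symmetry of $M_0$ will then let me transfer the resulting integral estimates into pointwise control on $h$ and $\rho$.

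The first step is to verify that origin-symmetry persists along \eqref{flow}. Since $f$ and $g$ are even, the right-hand side of \eqref{flow} is invariant under $X\mapsto -X$, so $-M_t$ and $M_t$ solve the same initial-value problem and must coincide by uniqueness of the smooth solution. Thus $h(\cdot,t)$ and $\rho(\cdot,t)$ are even on $\uS$, and the standard identities $\max_{\uS} h=\max_{\uS}\rho$ and $\min_{\uS} h=\min_{\uS}\rho$ hold for every $t$. Lemmas \ref{lem02}--\ref{lem03} then give, uniformly in $t$,
\[
\int_{\uS} f(x)\log h(x,t)\,dx \;\leq\; J(0)+V_g(M_0) \;=:\; C_1 ,\qquad \int_{\uS} g(u)\log\rho(u,t)\,du \;\equiv\; V_g(M_0).
\]

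For the upper bound, set $R(t):=\max h(\cdot,t)=\max\rho(\cdot,t)$ and let $u_0=u_0(t)$ be a maximizer of $\rho(\cdot,t)$. Origin-symmetry places $\pm R(t)u_0$ in $K_t$, so the segment joining them lies in $K_t$, yielding the pointwise bound $h(x,t)\geq R(t)|\langle u_0,x\rangle|$. Since $c_0:=\int_{\uS}\log|\langle e,x\rangle|\,dx$ is a finite rotation-invariant constant (integrability of $\log|\cos\theta|$ on $\uS$), the first displayed inequality forces
\[
\Big(\int_{\uS} f\Big)\log R(t) \;\leq\; C_1 - (\sup f)\,c_0,
\]
and hence $R(t)\leq C$. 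For the lower bound let $r(t):=\min h(\cdot,t)$ with minimizer $x_1=x_1(t)$. Origin-symmetry gives $h(\pm x_1,t)=r(t)$, so $K_t$ is trapped in the slab $\{y:|\langle y,x_1\rangle|\leq r(t)\}$, whence $\rho(u,t)\leq r(t)/|\langle u,x_1\rangle|$. Splitting $\uS$ into the cap $E(t):=\{u:|\langle u,x_1\rangle|\geq 1/2\}$, on which $\rho\leq 2r(t)$, and its complement, on which $\rho\leq R(t)\leq C$ by the previous step, the conservation identity yields
\[
V_g(M_0) \;\leq\; \log(2r(t))\int_{E(t)} g\,du \;+\; \log C\int_{\uS\setminus E(t)} g\,du.
\]
Because $E(t)$ is a rotation of a fixed spherical region and $g$ is strictly positive on $\uS$, $\int_{E(t)} g\,du$ is bounded below by a positive constant independent of $x_1$; hence $r(t)\to 0$ would send the right-hand side to $-\infty$, a contradiction. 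Thus $r(t)\geq 1/C$, and \eqref{rho1} follows from $\max h=\max\rho$, $\min h=\min\rho$.

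The main obstacle I expect is the interplay between the two bounds: the slab argument only controls $\rho$ on the cap $E(t)$, so the upper bound on $R$ must already be in hand before the lower bound on $r$ can be carried out. The remaining delicate points---integrability of $\log|\langle e,\cdot\rangle|$ on $\uS$ and the uniform positive lower bound on $\int_{E(t)} g\,du$---follow from compactness of $\uS$ and strict positivity of $g$, and should cause no real difficulty.
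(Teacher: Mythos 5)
Your argument is correct, and the overall strategy — exploit the monotonicity of $J$ together with the conservation of $V_g$, then use origin-symmetry to turn the resulting integral bounds into pointwise control — is the same as the paper's. The upper-bound step is essentially identical to the paper's: both place the segment $[-R(t)u_0, R(t)u_0]$ inside $K_t$ to get $h(x,t)\geq R(t)|\langle u_0,x\rangle|$, and both rely on the finiteness of $\int_{\uS}\log|\langle e,x\rangle|\,dx$. Where you genuinely differ is in the lower bound. The paper argues by contradiction: it extracts a sequence $t_k$ with $\min h(\cdot,t_k)\to 0$, invokes the Blaschke selection theorem to get a limit body $\widetilde K$ contained in a hyperplane, observes $\rho_{\widetilde K}=0$ a.e., and then derives a contradiction from $V_g(M_{t_k})=V_g(M_0)$ by letting $\epsilon\to 0$ in $\int g\log(\rho+\epsilon)$. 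Your slab argument replaces this compactness passage by a direct, uniform-in-$t$ estimate: you confine $K_t$ in the slab $\{|\langle y,x_1\rangle|\le r(t)\}$, split $\uS$ into the cap $E(t)$ and its complement, bound $\rho$ by $2r(t)$ on the cap and by the previously obtained $R(t)\le C$ on the complement, and let the fixed lower bound $\int_{E(t)}g\ge (\min g)|E|>0$ force $\log(2r(t))$ to stay bounded below. This avoids the Blaschke selection theorem and the limiting argument entirely, and it makes the dependence on the upper bound explicit (which you correctly flag as the point that must be handled first); the paper's version, by contrast, has the two halves formally independent but at the cost of a compactness detour. Both proofs are valid; yours is the more elementary one for the lower bound.

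One small remark: the identities $\max_{\uS}h=\max_{\uS}\rho$ and $\min_{\uS}h=\min_{\uS}\rho$ that you invoke hold for any convex body containing the origin in its interior (at a critical point of $h$ the boundary point is radial), not only for origin-symmetric ones, so they do not need to be attributed to symmetry. This does not affect the correctness of your argument.
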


\begin{proof}
Note that $M_0$ is origin-symmetric, $f$ and $g$ are even functions,
thus $M_t$ is origin-symmetric and $h(x, t)$ is an even function.
For fixed $t\in[0,T)$, assume $h(x,t)$ attains its maximum at $x_t$.
Since $h(x, t)$ is an even function, we have by the definition of
the support function \eqref{h0}
\begin{equation}\label{eq:26}
h(x,t)\geq |\langle x, x_t\rangle| h(x_t, t).
\end{equation}
By Lemmas \ref{lem02} and \ref{lem03},
\begin{eqnarray*}
\int_{\uS} f(x)\log h(x,0)dx &\geq& \int_{\uS} f(x)\log h(x,t)dx \\
&\geq& \int_{\uS} f(x)\log |\langle x,x_t\rangle| h(x_t,t)dx \\
&\geq&C\log h(x_t,t)-C,
\end{eqnarray*}
which implies that
\begin{equation*}
\max_{\uS\times [0,T)} h(x,t)\leq C
\end{equation*}
for some positive constant $C$.

The positive lower bound of $h$ will be proved by contradiction. Let
$\{t_{k}\}\subset [0,T)$ be a sequence such that
\begin{equation*}
\min_{\uS}h(\cdot,t_k)\rightarrow 0 \quad \text{as}\quad k\rightarrow\infty.
\end{equation*}
Let $K_t$ be the convex body enclosed by $M_t$. By Blaschke
selection theorem, there is a sequence in $\{K_{t_k}\}$, which is
still denoted by $\{K_{t_k}\}$, such that
\begin{equation*}
K_{t_k}\rightarrow \widetilde{K} \quad \text{as} \quad k\rightarrow +\infty.
\end{equation*}
Since $K_{t_k}$ is an origin-symmetric convex body, $\widetilde{K}$ is also origin-symmetric.
Then
\begin{eqnarray*}
\min_{\uS}h_{\widetilde{K}}=\lim_{k\rightarrow+\infty}\min_{\uS}h_{K_{t_k}}
=0.
\end{eqnarray*}
It follows that $\widetilde{K}$ is contained in a hyperplane in
$\mathbb{R}^{n}$. Then
$$\rho_{\widetilde{K}}=0, \quad \text{a.e. in $\uS$}.$$
Using Lemma \ref{lem03}, we have for any $\epsilon>0$
\begin{eqnarray*}
V_g(M_0)&=&
V_{g}(M_{t_k})\\
&\leq& \lim_{k\rightarrow+\infty}\int_{\uS} g(u)\log [\rho(u,t_k)+\epsilon]du\\
&=& \int_{\uS}g(u)\log \epsilon du\\
&=&C \log \epsilon \rightarrow-\infty \quad \text{as} \quad \epsilon\rightarrow 0 ,
\end{eqnarray*}
which is a contradiction. Then $$\min_{\uS\times[0,T)}h(x,t)\geq C$$
for some positive constant $C$. So we complete the proof.
\end{proof}

Due to the convexity of $M_t$, Lemma \ref{lem04} also implies the gradient
estimates of $h(\cdot,t)$ and $\rho(\cdot,t)$.

\begin{lemma}\label{lem06}
Let $X(\cdot, t)$ be a strictly convex solution to the flow
\eqref{flow} which encloses the origin for $t \in [0,T) $, then we
have
\begin{gather*}
  |\nabla h(x,t)| \leq C,
  \quad \forall (x,t) \in \mathbb{S}^{n-1} \times [0, T), \\
  |\nabla \rho(u,t)| \leq C,
  \quad \forall (u,t) \in \mathbb{S}^{n-1} \times [0, T),
\end{gather*}
where $C$ is a positive constant depending only on the constant in Lemma \ref{lem04}.
\end{lemma}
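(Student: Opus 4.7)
The key observation is that for a convex body enclosing the origin with support function $h$ and radial function $\rho$, there are purely algebraic identities relating $h$, $\rho$, $|\nabla h|$, and $|\nabla \rho|$ that come from the geometry in Section 2. My plan is to extract these identities and then plug in the bounds from Lemma 3 directly, without needing any PDE or flow information beyond what Lemma 3 already provides.

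For the first inequality, I would start from \eqref{Xh}, which reads $X(x) = \nabla h(x) + h(x)x$. Since $\nabla h(x)$ is tangent to $\uS$ at $x$ while $h(x)x$ is normal, the two summands are orthogonal, so $|X(x)|^2 = |\nabla h(x)|^2 + h(x)^2$. But by \eqref{eq:8}, $|X(x)| = \rho(u)$ with $u = X(x)/|X(x)|$. Hence
\begin{equation*}
|\nabla h(x,t)|^2 = \rho(u,t)^2 - h(x,t)^2 \le \rho(u,t)^2 \le C^2,
\end{equation*}
using the upper bound on $\rho$ from Lemma \ref{lem04}.

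For the second inequality, I would use \eqref{eq:9}: $x = (\rho u - \nabla \rho)/\sqrt{\rho^2 + |\nabla \rho|^2}$. Taking the inner product with $u$ (and noting $\nabla\rho \perp u$) gives $\langle x, u\rangle = \rho/\sqrt{\rho^2 + |\nabla\rho|^2}$. On the other hand, $h(x) = \langle x, X\rangle = \rho \langle x, u\rangle$, so
\begin{equation*}
h = \frac{\rho^2}{\sqrt{\rho^2 + |\nabla\rho|^2}},
\qquad\text{hence}\qquad
|\nabla\rho|^2 = \frac{\rho^2(\rho^2 - h^2)}{h^2}.
\end{equation*}
Applying the two-sided bounds on $h$ and $\rho$ from Lemma \ref{lem04} gives $|\nabla\rho(u,t)| \le C$, as desired.

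There is no real obstacle here: once the $C^0$ bounds of Lemma \ref{lem04} are in hand, the gradient bounds are a consequence of elementary convex-body geometry, and the role of the flow is only to provide the $C^0$ bounds. The mild technical point to keep straight is the correspondence $u \leftrightarrow x$ given by \eqref{eq:8} so that the inequalities on $\rho(u,t)$ and $h(x,t)$ may be used interchangeably to bound either gradient.
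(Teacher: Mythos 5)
Your proposal is correct and takes essentially the same route as the paper: it derives the two algebraic identities $\rho^2 = |\nabla h|^2 + h^2$ and $h = \rho^2/\sqrt{\rho^2+|\nabla\rho|^2}$ from \eqref{eq:8}--\eqref{eq:9}, then applies the $C^0$ bounds of Lemma \ref{lem04}. You simply spell out the orthogonality and the inner-product computation more explicitly than the paper, which cites the same identities with minimal derivation.
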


\begin{proof}
By virtue of \eqref{eq:8}, we have
\begin{equation}\label{G-1}
  \rho^2=|\nabla h|^2+h^2\geq |\nabla h|^2.
\end{equation}
By \eqref{h}, \eqref{eq:8} and \eqref{eq:9}, we have
\begin{equation}\label{G-2}
h= \frac{\rho^2}{\sqrt{\rho^{2}+|\nabla \rho|^{2}}}\leq
\frac{\rho^2}{|\nabla \rho|}.
\end{equation}
Using the two inequalities \eqref{G-1} and \eqref{G-2}, the
estimates of this lemma now follows directly from Lemma \ref{lem04}.
\end{proof}

\section{Uniform bounds for principal curvatures}

In this section, we continue to establish uniform upper and lower
bounds for principal curvatures. These estimates can be obtained by
considering proper auxiliary functions, see
\cite{CHZ.MA.373-2019.953,CLLN, LSW.JEMSJ.22-2020.893,
LL.TAMS.373-2020.5833} for similar techniques. First, we need the
following lemma.

\begin{lemma}\label{G-g}
Given two positive constants $r<R$. For the function
$$G(y)=\frac{|y|^n}{g(\frac{y}{|y|})},\quad y=(y^1, ..., y^n)\in A(r, R)=\{y \in \mathbb{R}^{n}: r<|y|<R\},$$
we have
\begin{eqnarray*}
\norm{G}_{C^k(A(r, R))}\leq C_k\norm{g}_{C^k(\uS)},
\end{eqnarray*}
where $k=0,1,2$, and $C_k$ is a positive constant depending only on $n, r, R$,
$\norm{g}_{C^k(\uS)}$, and $\min\limits_{\uS} g$.
\end{lemma}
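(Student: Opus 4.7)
The plan is to decompose $G(y)$ into simple factors and control each of them separately on the annulus $A(r,R)$. Write
$$G(y) = |y|^{n}\cdot \tilde g\!\left(\frac{y}{|y|}\right), \qquad \tilde g := \frac{1}{g},$$
and treat the radial factor $|y|^{n}$, the angular projection $y \mapsto y/|y|$, and the scalar function $\tilde g$ independently. Since $r \leq |y| \leq R$ stays bounded away from $0$, both $y \mapsto |y|^{n}$ and $y \mapsto y/|y|$ are $C^{\infty}$ on $A(r,R)$, with every partial derivative of order $\leq 2$ bounded by a constant depending only on $n, r, R$. This handles the purely geometric part of the estimate, with no dependence on $g$ at all.

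Next I would estimate $\tilde g = 1/g$ on $\uS$ via the quotient rule: for example $\nabla\tilde g = -\nabla g / g^{2}$ and $\nabla^{2}\tilde g = -\nabla^{2} g/g^{2} + 2(\nabla g \otimes \nabla g)/g^{3}$. Inductively, $\nabla^{k}\tilde g$ is a polynomial in $\nabla^{j} g$ ($j \leq k$) divided by a power of $g$, so for $k = 0, 1, 2$,
$$\norm{\tilde g}_{C^{k}(\uS)} \leq C\bigl(k,\ \norm{g}_{C^{k}(\uS)},\ \min_{\uS} g\bigr).$$
Composing $\tilde g$ with the smooth map $y \mapsto y/|y|$ and applying the chain rule then gives the corresponding $C^{k}$ bound for $\tilde g(y/|y|)$ on $A(r,R)$, with the new constant also depending on $r, R$ through the derivatives of $y/|y|$.

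Finally the Leibniz rule, applied to the product $G = |y|^{n} \cdot \tilde g(y/|y|)$, combines the two estimates into the desired bound on $\norm{G}_{C^{k}(A(r,R))}$, which can be absorbed into a single constant $C_{k}$ of the form stated in the lemma. There is no genuine obstacle here; the argument is entirely a routine application of the chain, product, and quotient rules, made possible by the fact that both denominators $|y|$ and $g$ are bounded away from $0$ on their respective domains. The only care needed is the bookkeeping of the polynomial expressions in $\nabla g, \nabla^{2} g$ that appear after differentiating twice, which is manageable because we only need to handle $k \leq 2$.
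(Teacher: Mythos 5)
Your argument is correct and takes essentially the same route as the paper: explicit use of the chain, product, and quotient rules on the annulus, with the key observation that $|y|\geq r>0$ and $g\geq\min_{\uS}g>0$ keep all denominators under control. The only cosmetic difference is that you factor $G$ as $|y|^n\cdot\tilde g(y/|y|)$ with $\tilde g=1/g$ and apply the Leibniz rule, whereas the paper applies the quotient rule directly to $|y|^n/g$; the calculations are the same.
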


\begin{proof}
Denote $\partial_i=\frac{\partial}{\partial y^i}$ for $1\leq i\leq
n$. It is clearly see that
\begin{eqnarray*}
\overline{\nabla}_i|y|^n=n|y|^{n-2}y^i \quad \text{and} \quad
\overline{\nabla}_ig=\frac{1}{|y|^3}\langle \overline{\nabla} g,|y|^2\partial_i-yy^i\rangle.
\end{eqnarray*}
Thus,
\begin{eqnarray*}
\overline{\nabla}_iG(y)
&=&\frac{n|y|^{n-2}y^i}{g}-\frac{|y|^{n-3}\langle \overline{\nabla} g,|y|^2\partial_i-yy^i\rangle}{g^2}.
\end{eqnarray*}
It follows consequently
\begin{eqnarray*}
\norm{G}_{C^1(A(r, R))}\leq C_1 \norm{g}_{C^1(\mathbb{S}^n)}.
\end{eqnarray*}

Moreover, we have
\begin{eqnarray*}
\overline{\nabla}_j\overline{\nabla}_i|y|^n=n(n-2)|y|^{n-4}y^iy^j
+n|y|^{n-2}\delta_{ij}
\end{eqnarray*}
and
\begin{eqnarray*}
\overline{\nabla}_j\overline{\nabla}_ig
&=&\frac{1}{|y|^3}\overline{\nabla}^2 g\Big(|y|^2\partial_i-yy^i, |y|^2\partial_i-yy^i\Big)
+\frac{1}{|y|^3}\Big\langle \overline{\nabla} g,2y^j\partial_i-y^i\partial_j-y\delta_{ij}\Big\rangle
\\&&-\frac{3y^j}{|y|^5}\langle \overline{\nabla} g,|y|^2\partial_i-yy^i\rangle.
\end{eqnarray*}
Note that
\begin{eqnarray*}
\overline{\nabla}_j\overline{\nabla}_iG
&=&\frac{1}{g}\overline{\nabla}_j\overline{\nabla}_i|y|^n
-|y|^n\frac{\overline{\nabla}_j\overline{\nabla}_ig}{g^2}+2|y|^n
\frac{\overline{\nabla}_ig\overline{\nabla}_j g}{g^3}-2\overline{\nabla}_i(|y|^n)
\frac{\overline{\nabla}_j g}{g}.
\end{eqnarray*}
Thus, we get
\begin{eqnarray*}
\norm{G}_{C^2(A(r, R))}\leq C_2 \norm{g}_{C^2(\mathbb{S}^n)}
\end{eqnarray*}
in view of
\begin{eqnarray*}
\overline{\nabla}^2 g(e_i, e_j)=\nabla^2 g(e_i,
e_j)-\langle\overline{\nabla}g, \frac{y}{|y|}\rangle \delta_{ij}
\end{eqnarray*}
for a local orthonormal frame $\{e_{1}, \cdots, e_{n-1}\}$ on
$\mathbb{S}^{n-1}$. So, our proof is completed.
\end{proof}

If we have proved Lemma \ref{G-g}, we can derive the uniform upper
bound of the Gauss curvature of $M_t$ and the uniform lower bound
for principal curvatures by similar calculations which have been
done in \cite{CLLN}. In the rest of this section, we take a local
orthonormal frame $\{e_{1}, \cdots, e_{n-1}\}$ on $\mathbb{S}^{n-1}
$ such that the standard metric on $\mathbb{S}^{n-1} $ is
$\{\delta_{ij}\}$. And double indices always mean to sum from $1$ to
$n-1$.

\begin{lemma}\label{lem07}
Let $X(\cdot, t)$ be a strictly convex solution to the flow
\eqref{flow} which encloses the origin for $t \in [0,T) $, then we
have
\begin{equation*}
  \mathcal{K}(x,t) \leq C,
  \quad \forall (x,t) \in \mathbb{S}^{n-1} \times [0, T),
\end{equation*}
where $C$ is a positive constant depending only on the constants in
Lemmas \ref{lem04} and \ref{lem06}.
\end{lemma}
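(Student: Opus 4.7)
The plan is to convert the upper bound on $\mathcal{K}$ into an upper bound on the speed $h-h_t$, and then apply the parabolic maximum principle to a suitable auxiliary function. Solving the evolution equation \eqref{ht} for $\mathcal{K}$ gives
$$\mathcal{K}(x,t) = \frac{g(u)\bigl(h(x,t)-h_t(x,t)\bigr)}{f(x)\,\rho^n(u,t)},$$
so by Lemmas \ref{lem04} and \ref{lem06} (which supply two-sided bounds on $h$, $\rho$, and $\nabla h$) combined with the hypothesis that $f,g$ are positive $C^\infty$ functions on $\uS$, the desired estimate is equivalent to a uniform upper bound on $\Theta := h - h_t$. Writing $\Theta = f(x)\,G(\delbar h)/\det b_{ij}$, with $G(y) = |y|^n/g(y/|y|)$ and $b_{ij} = \nabla_i\nabla_j h + h\delta_{ij}$, and taking logarithms yields
$$\log\Theta = \log f + \log G(\delbar h) - \log\det b_{ij}.$$

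I would then test the auxiliary quantity $Q := \log\Theta - A h$ for a positive constant $A$ to be chosen sufficiently large, and apply the parabolic maximum principle at a first interior space-time maximum $(x_0,t_0)$, which we may assume satisfies $t_0>0$. The three conditions $\nabla Q = 0$, $\nabla^2 Q \le 0$, and $\partial_t Q \ge 0$ translate the spatial derivatives of $\log\Theta$ into expressions involving $A\nabla h$ and $A\nabla^2 h$, while differentiating the displayed identity in time (using $\partial_t b_{ij} = \nabla_i\nabla_j h_t + h_t\delta_{ij}$ and the relation $h_t = h - \Theta$) produces a parabolic inequality whose leading negative contribution is the constant $-A\,b^{ij}(\nabla_i\nabla_j h + h\delta_{ij}) = -A(n-1)$. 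The cross terms arising from the $\log G(\delbar h)$ piece involve $\nabla G$ and $\nabla^2 G$ tested against derivatives of $h$ and $h_t$, and at the critical point these can be re-expressed in terms of $\Theta$, $\nabla h$, $\nabla^2 h$, and the principal radii $b_{ij}$. Controlling the $C^2$-norm of $G$ on the relevant annulus by $\|g\|_{C^2(\uS)}$ via Lemma \ref{G-g}, and invoking Lemmas \ref{lem04}--\ref{lem06} for the bounds on $h,\nabla h,\rho$, every such cross term is estimated by a constant (depending on those bounds) times a polynomial in $\Theta$ and $A$. Choosing $A$ large in terms of those constants absorbs the error contributions into the good negative term and forces $\Theta(x_0,t_0) \le C$, hence $\mathcal{K}\le C$ on $\uS\times[0,T)$.

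The main obstacle will be the algebraic bookkeeping in the last step: the spatial derivatives of $G(\delbar h)$ generate cross terms quadratic in $\nabla^2 h$ that are not sign-definite, and one must verify that the positive lower bound $h\geq 1/C$ and the positivity of $b_{ij}$ are enough to absorb them into the leading term rather than have them dominate. This is precisely where the $C^2$ control packaged in Lemma \ref{G-g} is essential, and after it is in place the argument proceeds in close parallel to the computation in \cite{CLLN}.
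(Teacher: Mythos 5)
Your plan to reduce the bound on $\mathcal{K}$ to a bound on the speed $\Theta=h-\partial_t h$ and to bound $\Theta$ by a maximum-principle argument on an auxiliary function is the right framework, and your reduction $\log\Theta=\log f+\log G(\delbar h)-\log\det b_{ij}$ matches the paper. However, the specific auxiliary function $Q=\log\Theta-Ah$ does not have the cancellation needed to close the argument, and the claim that the ``leading negative contribution'' is the $\Theta$-independent constant $-A(n-1)$ misidentifies the mechanism.

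To see the gap concretely: at a first interior maximum $(x_0,t_0)$ the condition $\nabla Q=0$ gives $\Theta_i/\Theta=Ah_i$, and substituting this into $\nabla^2 Q\le 0$ yields
\begin{equation*}
\frac{\Theta_{ij}}{\Theta}\ \le\ A\,h_{ij}+A^2\,h_i h_j.
\end{equation*}
Carrying this through $\partial_t\log\Theta=\partial_t G/G-(n-1)+b^{ij}\Theta_{ij}+\Theta\,\mathrm{tr}(b^{ij})$ and $\partial_t Q=\partial_t\log\Theta-A(h-\Theta)\ge 0$, the coefficient of the dangerous factor $\Theta\,\mathrm{tr}(b^{ij})$ collects to $(1-Ah)$ from the good part, but there is also the extra positive term $A^2\Theta\,b^{ij}h_ih_j$, which is of the same order $\Theta\,\mathrm{tr}(b^{ij})\cdot A^2|\nabla h|^2$ in the worst case. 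One therefore needs $1-Ah+A^2|\nabla h|^2<0$ uniformly, and with the a priori bounds $h\ge 1/C$, $|\nabla h|\le C$ from Lemmas \ref{lem04}--\ref{lem06} this quadratic inequality in $A$ has no solution for generic $C$. So the $A^2(\nabla h\otimes\nabla h)$ term produced by squaring the first-order critical condition cannot be absorbed, no matter how $A$ is chosen. The additive form $\log\Theta-Ah$ (or $\log\Theta-A\log h$) generically has this defect. Moreover, the constant $-A(n-1)$ that you single out cannot dominate contributions that grow with $\Theta$; what must dominate is a term that is \emph{superlinear} in $\Theta$, and you have not exhibited one.

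The paper's choice is the ratio $Q=\dfrac{h-\partial_t h}{h-\varepsilon_0}$ with $\varepsilon_0=\tfrac12\inf h$. Written multiplicatively as $\log\Theta-\log(h-\varepsilon_0)$, the first-order condition $\Theta_i/\Theta=h_i/(h-\varepsilon_0)$ makes the two square terms $\Theta_i\Theta_j/\Theta^2$ and $h_ih_j/(h-\varepsilon_0)^2$ cancel exactly in $\nabla^2 Q\le 0$, leaving the clean inequality $\Theta_{ij}/\Theta\le h_{ij}/(h-\varepsilon_0)$. This is precisely what eliminates the $A^2|\nabla h|^2$ obstruction. It also exposes the genuine good term: combining with $\mathrm{tr}(b^{ij})\ge (n-1)\mathcal{K}^{1/(n-1)}$ and $\mathcal{K}\sim Q$ yields a superlinear $-CQ^{(2n-1)/(n-1)}$ on the right-hand side, which eventually dominates the linear and quadratic error terms estimated via Lemma \ref{G-g}. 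If you replace your $Q=\log\Theta-Ah$ by the paper's ratio (or its logarithm), your outline becomes correct and indeed proceeds as you describe, closely paralleling \cite{CLLN}.
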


\begin{proof}
Set
\begin{equation*}
Q(x, t)
=\frac{-\pd_th(x,t)+h(x,t)}{h(x,t)-\varepsilon_0}=\frac{f(x)\rho^n(u)}{(h-\varepsilon_0)g(u)}
\mathcal{K}(x,t),
\end{equation*}
where
\[ \varepsilon_0 =\frac{1}{2}\,\inf_{\uS\times[0,T)} h(x, t) \]
and the second equality follows from \eqref{ht}. For each
$t\in[0,T)$, assume $Q(\cdot,t)$ attains its maximum at some point
$x_t\in\uS$. At $(x_t, t)$, we can obtain
\begin{equation}\label{K-1}
  0=Q_i
  =\frac{-\pd_t h_i+h_i}{h-\varepsilon_0}
  +\frac{\pd_th- h}{(h-\varepsilon_0)^2}h_i,
\end{equation}
and
\begin{equation}\label{K-2}
  0\geq Q_{ij}
  =\frac{-\pd_th_{ij}+h_{ij}}{h-\varepsilon_0}
  +\frac{(\pd_th-h)h_{ij}}{(h-\varepsilon_0)^2},
\end{equation}
where \eqref{K-1} is used in \eqref{K-2}. Recall that
$b_{ij}=h_{ij}+h\delta_{ij}$, and $b^{ij}$ is its inverse matrix.
Using the inequality \eqref{K-2}, it yields
\begin{equation*}
  \begin{split}
    \pd_tb_{ij}
    &=\pd_th_{ij}+\pd_th\delta_{ij} \\
    &\geq h_{ij} +\frac{\pd_th-h}{h-\varepsilon_0}h_{ij} +\pd_th\delta_{ij} \\
    &= b_{ij} - Q (b_{ij} -\varepsilon_0\delta_{ij}).
\end{split}
\end{equation*}
Noticing that $\mathcal{K}=1/\det(b_{ij})$, we have
\begin{equation}\label{eq:12}
  \begin{split}
    \pd_t\mathcal{K}
    &= -\mathcal{K}b^{ji}\pd_tb_{ij} \\
    &\leq -\mathcal{K}b^{ji}[b_{ij} - Q (b_{ij} -\varepsilon_0\delta_{ij})] \\
    &= -\mathcal{K}\bigl[(n-1)(1 - Q) + Q\varepsilon_0 \TR(b^{ij})\bigr].
  \end{split}
\end{equation}
Note that
\begin{equation}\label{eq:10}
Q(x,t)
  = \frac{f(x)\rho^n(u)}{(h-\varepsilon_0)g(u)} \mathcal{K}(x,t),
\end{equation}
we derive by Lemma \ref{lem04}
\begin{equation} \label{eq:11}
  \frac{1}{C_1} Q(x,t)\leq \mathcal{K}(x,t)\leq C_1 Q(x,t),
\end{equation}
where $C_1$ is a positive constant depending only on the constant
$C$ in Lemma \ref{lem04}, and the upper and lower bounds of $f,g$ on
$\uS$. Combining Lemma \ref{lem04} and the inequalities
\eqref{eq:12} and \eqref{eq:11}, we have
\begin{equation}\label{eq:15}
  \begin{split}
    \pd_t\mathcal{K}
    &\leq (n-1)  \mathcal{K} Q -(n-1)\varepsilon_0 Q \mathcal{K}^{\frac{n}{n-1}} \\
    &\leq C_2 Q^2 -C_3 Q^{\frac{2n-1}{n-1}},
  \end{split}
\end{equation}
where the inequality
\begin{equation*}
  \frac{1}{n-1}\TR(b^{ij})
  \geq \det(b^{ij})^{\frac{1}{n-1}}
  =\mathcal{K}^{\frac{1}{n-1}}
\end{equation*}
is used. Here $C_2$, $C_3$ are positive constants depending only on
$n$, $\varepsilon_0$ and $C_1$. By the definition of $Q$ and
\eqref{K-1},
\begin{equation}\label{eq:5}
\pd _t h_i=(1-Q)h_i.
\end{equation}
Thus, we have
\begin{eqnarray}\label{eq:6}
\pd_t (\nabla h+hx) &=& \pd_t (h_ie_i+hx)\\ \nonumber &=& \pd_t
h_ie_i+ (\pd_t h) x\\\nonumber &=&
(1-Q)h_ie_i+(h-(h-\varepsilon_0)Q)x\\\nonumber &=& (1-Q)(\nabla
h+hx)+\varepsilon_0 Q x.
\end{eqnarray}
So, we can say that
\begin{eqnarray}\label{eq:7}
\pd_t G(\nabla h+hx)&=&\langle \overline{\nabla}G, \pd_t (\nabla
h+hx)\rangle\\ \nonumber
&=&(1-Q)\langle\overline{\nabla}G, \nabla h+hx\rangle+\varepsilon_0 Q\langle \overline{\nabla}G, x\rangle\\
\nonumber &\leq& (1-Q)|\overline{\nabla}G| \cdot |\nabla
h+hx|+\varepsilon_0 Q|\overline{\nabla}G|\\ \nonumber &\leq&
C_4(1-Q) |\nabla h+hx|+C_4\varepsilon_0 Q,
\end{eqnarray}
where we know by Lemma \ref{G-g} that $C_4$ is a positive constant
depending on $n$, the constant $C$ in Lemmas \ref{lem04} and
\ref{lem06}, $\norm{g}_{C^1(\uS)}$ and $\min\limits_{\uS} g$.

Thus,
\begin{equation}\label{eq:16}
  \begin{split}
    &\frac{\pd}{\pd t} \Bigl[\frac{G(\nabla h+hx)}{h(x,t)-\varepsilon_0}  \Bigr] \\
    =& \frac{\pd_t G}{h-\varepsilon_0}-\frac{G\pd_t h}{(h-\varepsilon_0)^2}\\
    \leq &\frac{(1-Q)|\overline{\nabla}G| \cdot |\overline{\nabla} h|
    +\varepsilon_0 Q|\overline{\nabla}G|}{h-\varepsilon_0}+\frac{[(1-Q)h+\varepsilon_0Q]G}{(h-\varepsilon_0)^2}\\
    \leq& C_5+C_5Q,
  \end{split}
\end{equation}
where $C_5$ is a positive constant depending only on
$\varepsilon_0$, $C_4$ and the constant $C$ in Lemmas \ref{lem04} and \ref{lem06}.

By virtue of \eqref{eq:10}, \eqref{eq:15} and \eqref{eq:16}, we have
at $(x_t,t)$
\begin{eqnarray}\label{eq:14}
\pd_t Q&=& f\pd_t[\frac{G}{h-\varepsilon_0}]\mathcal{K}+f
\frac{G}{h-\varepsilon_0}\pd_t \mathcal{K}\\ \nonumber &\leq&
\max_{\uS} f \cdot C_5(1+Q)C_1 Q+\frac{Q}{\mathcal{K}}(C_2 Q^2 -C_3
Q^{\frac{2n-1}{n-1}})\\ \nonumber &\leq& \max_{\uS} f
\cdot C_1C_5Q(1+Q)+ C_1C_2 Q^2 -C_1^{-1}C_3 Q^{\frac{2n-1}{n-1}})\\
\nonumber &\leq& CQ+CQ^2-CQ^{\frac{2n-1}{n-1}},
\end{eqnarray}
where we have used \eqref{eq:11} to obtain the third inequality and
$C$ is a positive constant depending only on $\max\limits_{\uS} f$,
$C_1$, $C_2$, $C_3$ and $C_5$. Thus, whenever $Q(x_t,t)$ is greater
than some constant which is independent of $t$, we have
\begin{equation*}
\pd_tQ<0,
\end{equation*}
which implies that $Q$ has an uniform upper bound. By \eqref{eq:11},
$\mathcal{K}$ has a uniform upper bound.
\end{proof}

\begin{lemma}\label{lem08}
Let $X(\cdot, t)$ be a strictly convex solution to the flow
\eqref{flow} which encloses the origin for $t \in [0,T) $, then for
the principal curvatures $\kappa_{i}(x,t)$ of $M_t$, we have
  \begin{equation*}
  \kappa_{i}(x,t) \geq C, \quad \forall (x,t)\in\uS\times[0,T), \quad \forall i=1,\cdots,
  n-1,
  \end{equation*}
where $C$ is a positive constant depending only on the constant in
Lemmas \ref{lem04} and \ref{lem06}.
\end{lemma}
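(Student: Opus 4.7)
The target inequality $\kappa_i\geq C$ is equivalent to a uniform upper bound for the principal radii of curvature, i.e.\ for the eigenvalues of $b_{ij}=\nabla_i\nabla_j h+h\delta_{ij}$. Since Lemma~\ref{lem07} already provides an upper bound on $\mathcal{K}=1/\det(b_{ij})$, and hence a positive lower bound on $\det(b_{ij})$, it suffices to establish a uniform upper bound on the largest eigenvalue $\lambda_{\max}(b_{ij})$: the remaining eigenvalues are then automatically bounded below, since $\det b_{ij}\leq \lambda_{\min}\,\lambda_{\max}^{n-2}$ yields $\lambda_{\min}\geq c/\lambda_{\max}^{n-2}$.

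The plan is to apply the parabolic maximum principle to an auxiliary function of the form
\[
W(x,t)=\log\lambda_{\max}(b_{ij}(x,t))+\phi(h(x,t)),
\]
with $\phi(s)=-A\log(s-\varepsilon_0)$, $\varepsilon_0=\tfrac{1}{2}\inf_{\uS\times[0,T)}h$, and $A>0$ a large constant to be chosen. First I would compute, starting from \eqref{ht} and using $\nabla_k\mathcal{K}=-\mathcal{K}\,b^{ij}\nabla_k b_{ij}$ together with the Ricci identity on $\uS$, the evolution equation of $b_{ij}$, which has the parabolic shape $\pd_t b_{ij}=\mathcal{K} f\,G\,b^{kl}\nabla_k\nabla_l b_{ij}+(\text{lower order})$, where $G(y)=|y|^n/g(y/|y|)$ is evaluated at $y=\delbar h$. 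At a first time $t_0>0$ at which $W$ attains a new space-time maximum, I would rotate the frame at $x_0$ to diagonalize $b_{ij}$ so that $b_{11}=\lambda_{\max}$, and work with $\log b_{11}+\phi(h)$, for which the first- and second-order conditions $\nabla W=0$ and $\nabla^2 W\leq 0$ hold, as well as $\pd_t W\geq 0$.

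Substituting the evolution equation and using $(\log b_{11})_i=(b_{11})_i/b_{11}$ and $(\log b_{11})_{ii}\leq (b_{11})_{ii}/b_{11}$, together with the lower bound of $h$, the upper bound of $\mathcal{K}$ from Lemma~\ref{lem07}, and the $C^1$ bounds from Lemma~\ref{lem06}, one should reach a differential inequality of the form
\[
0\leq \pd_t W\leq -c_1\,b_{11}+c_2 \quad\text{at }(x_0,t_0),
\]
once $A$ is chosen large enough so that the convex term $\phi(h)$ absorbs the bad quadratic contributions in $\nabla b_{11}$. This yields $b_{11}(x_0,t_0)\leq c_2/c_1$, and hence a uniform upper bound on $\lambda_{\max}$ on $\uS\times[0,T)$.

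The main obstacle will be handling the derivatives of the factor $f(x)\rho(u)^n/g(u)$ in \eqref{ht}: the radial variable $u=(\nabla h+hx)/|\nabla h+hx|$ depends on $h$ and its first covariant derivatives, so each differentiation in $x$ produces terms involving $b_{ij}$, and a second differentiation produces cubic expressions in $b_{ij}$ that a priori threaten to dominate. Lemma~\ref{G-g} is designed precisely to handle this: writing $\rho(u)^n/g(u)=G(\delbar h)$ with $\delbar h$ already uniformly $C^1$-bounded by Lemmas~\ref{lem04} and~\ref{lem06}, one extracts uniform bounds on this composite factor and its first two derivatives, so that the troublesome contributions become harmless lower-order perturbations. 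With this in hand, the maximum-principle computation proceeds in the same spirit as in \cite{LSW.JEMSJ.22-2020.893, CLLN}, yielding the desired uniform lower bound on every principal curvature.
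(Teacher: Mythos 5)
Your overall strategy --- reducing to an upper bound on $\lambda_{\max}(b_{ij})$, applying the parabolic maximum principle to an auxiliary function built around $\log\lambda_{\max}(b_{ij})$, and invoking Lemma~\ref{G-g} to control the composite factor $G(\delbar h)=\rho^n/g$ and its derivatives --- is the same as the paper's. However, your choice of auxiliary function $W=\log\lambda_{\max}(b_{ij})-A\log(h-\varepsilon_0)$ is missing an essential ingredient, and the maximum-principle computation will not close with it.

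The obstruction is the Hessian of the inhomogeneous factor. Differentiating $w=\log[\,f\,G(\delbar h)\,]$ twice in the direction $e_1$ produces, among other things, the term $\overline{\nabla}^2 G(e_1,e_1)\,b_{11}^2/G$, whose sign is not controlled. After multiplying by $-b^{11}$ in the maximum-principle inequality, this contributes a term of order $b_{11}$ with uncontrolled sign. A function of $h$ alone cannot absorb it: when $\phi(h)$ is differentiated twice and contracted with $b^{ij}$, one gets only $\phi''\,b^{ij}h_ih_j+\phi'\,b^{ij}h_{ij}$, and since $b^{ii}h_{ii}=(n-1)-h\sum_i b^{ii}$, this is $O\!\left(A\sum_i b^{ii}\right)$, not $O(b_{11})$. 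Because $\sum_i b^{ii}$ may be small exactly when $b_{11}$ is large, the bad $O(b_{11})$ term dominates and the inequality gives no bound.

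The paper fixes this by adding a gradient term: the auxiliary function is $\widetilde\Lambda=\log\lambda_{\max}(b_{ij})-A\log h+B|\nabla h|^2$. The key point is that twice-differentiating $B|\nabla h|^2$ and contracting with $b^{ij}$ produces $2B\,b^{ii}\left(h_{ki}^2+h_kh_{kii}\right)$, and since $\sum_k b^{ii}h_{ki}^2=b^{ii}(b_{ii}-h)^2=\sum_i b_{ii}-2(n-1)h+h^2\sum_i b^{ii}$, this yields precisely a $+2B\sum_i b_{ii}$ contribution, which enters the final differential inequality (after sign-flipping) as $-2B\sum_i b_{ii}$ --- a good negative term of order $b_{11}$. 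This is what eventually dominates the uncontrolled $O(b_{11})$ contribution from $\nabla^2 G$, once $B$ is chosen large and $A$ is tuned accordingly (the paper takes $A=n+2BC^2$). Without the $|\nabla h|^2$ part of the auxiliary function, no term in your inequality scales like $-b_{11}$, and the proof does not go through. You should add the $B|\nabla h|^2$ term and then exploit the first-order critical-point identity $b^{11}b_{11;k}+2Bh_kh_{kk}=Ah_k/h$ to handle the gradient-of-$G$ cross terms, as the paper does in equations~\eqref{eq:21}--\eqref{eq:22}.
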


\begin{proof}
Set
\begin{equation*}
  \widetilde{\Lambda}(x, t)
  =\log \lambda_{\max}(b_{ij})-A\log h+B |\nabla h|^2,
  \quad \forall (x,t)\in\uS\times[0,T),
\end{equation*}
where $b_{ij}=h_{ij}+h\delta_{ij}$ as before,
$\lambda_{\max}(b_{ij})$ denotes the maximal eigenvalue of the
matrix $(b_{ij})$, and $A$ and $B$ are positive constants to be
chosen later.

For any fixed $T'\in(0,T)$, assume $\max_{\uS\times[0,T']}
\widetilde{\Lambda}(x,t)$ is attained at some point
$(x_0,t_0)\in\uS\times[0,T']$. By choosing a suitable orthonormal
frame, we may assume
\begin{equation*}
\{b_{ij}(x_0, t_0)\} \quad \text{is diagonal and} \quad
\lambda_{\max}(b_{ij})(x_0, t_0)=b_{11}(x_0, t_0).
\end{equation*}
Thus, the new function defined on $\uS\times[0,T']$
\begin{equation*}
\Lambda(x, t)=\log b_{11}-A\log h+B |\nabla h|^2
\end{equation*}
also attains its maximum at $(x_0,t_0)$. Thus, we have at
$(x_0,t_0)$
\begin{equation}\label{C2-1d}
0=\Lambda_i=b^{11}b_{11; i}-A\frac{h_i}{h}+2B \sum_{k}h_k h_{ki},
\end{equation}
and
\begin{equation} \label{C2-2d}
  \begin{split}
    0\geq \Lambda_{ij}
    &=b^{11}b_{11; ij}-(b^{11})^2 b_{11; i}b_{11; j}
    \\
    &\hskip1.1em-A\Bigl(\frac{h_{ij}}{h}-\frac{h_i h_j}{h^2}\Bigr)  +2B \sum_{k}(h_{kj} h_{ki}+h_kh_{kij}),
  \end{split}
\end{equation}
where $(b^{ij})$ is the inverse of the matrix $(b_{ij})$. Without
loss of generality, we can assume $t_0>0$. Then, we get at
$(x_0,t_0)$
\begin{equation} \label{eq:13}
  \begin{split}
  0\leq\pd_t\Lambda
  &=b^{11}(\pd_th_{11}+\pd_th) -A\frac{\pd_th}{h} +2B\sum_k h_k\pd_th_k.
  \end{split}
\end{equation}
From the equation \eqref{ht}, we know
\begin{equation}\label{c1}
 \log(h- \pd_th)
  = \log \mathcal{K}(x,t) + \log f(x)G(\nabla h+hx).
\end{equation}
Set
\begin{equation*}
  w(x,t)
  = \log\Big[ f(x)G(\nabla h+hx)\Big],
\end{equation*}
where $$G(\nabla h+hx)=\frac{|\nabla h+h x|^{n}}{g\bigg(\frac{\nabla h+hx}{|\nabla h+hx|}\bigg)}.$$ Differentiating \eqref{c1} gives
\begin{equation}\label{C2-00}
\frac{h_k-\pd_th_k}{h-\pd_th}=-b^{ji}b_{ij; k}+w_k,
\end{equation}
and
\begin{equation}\label{C2-11}
  \frac{h_{11}-\pd_th_{11}}{h-\pd_th}
  =\frac{(h_{1}-\pd_th_{1})^2}{(h-\pd_th)^2}
  -b^{ii}b_{ii; 11} +b^{ii}b^{jj}(b_{ij; 1})^2 +w_{11}.
\end{equation}
Multiplying both sides of \eqref{C2-11} by $-b^{11}$, it yields
\begin{equation}\label{eq:19}
  \begin{split}
  \frac{b^{11}\pd_th_{11}-b^{11}h_{11}}{h-\pd_th}
  &\leq b^{11}b^{ii}b_{ii; 11} -b^{11}b^{ii}b^{jj}(b_{ij; 1})^2 -b^{11}w_{11} \\
  &\leq b^{11}b^{ii}b_{11; ii} -b^{11}b^{ii}b^{11}(b_{i1; 1})^2 -\sum_i b^{ii} \\
  &\hskip1.1em +b^{11}(n-1-w_{11}),
  \end{split}
\end{equation}
where we use the Ricci identity $ b_{ii; 11}=b_{11;
ii}-b_{11}+b_{ii}$. We know that $b^{ij}\Lambda_{ij}\leq0$ from
\eqref{C2-2d} which implies at $(x_0, t_0)$
\begin{equation*}
  \begin{split}
    b^{11}b^{ii}b_{11; ii}-(b^{11})^2b^{ii} (b_{11; i})^2
    \leq
    Ab^{ii}\Bigl(\frac{h_{ii}}{h}-\frac{h_i^2}{h^2}\Bigr)
    -2B \sum_{k}b^{ii}(h_{ki}^2+h_kh_{kii}).
  \end{split}
\end{equation*}
Thus,
\begin{equation}\label{eq:199}
  \begin{split}
    &b^{11}b^{ii}b_{11; ii}-(b^{11})^2b^{ii} (b_{11; i})^2
    \\ \leq&
    \frac{(n-1)A}{h} -A \sum_i b^{ii}
    -\frac{Ab^{ii}h_i^2}{h^2} +4(n-1)Bh -2B\sum_i b_{ii} -2Bh^2\sum_i b^{ii} \\
    &-2Bb^{ii}h_kb_{ii;k} +2Bb^{ii}h_i^2,
  \end{split}
\end{equation}
where we use the following equalities
\begin{gather*}
  b^{ii}h_{ii}
  =b^{ii}(b_{ii}-h)
  =n-1-h\sum_i b^{ii}, \\
  \sum_{k}b^{ii}h_{ki}^2
  =b^{ii}h_{ii}^2
  =b^{ii}(b_{ii}^2-2hb_{ii}+h^2)
  = -2(n-1)h+\sum_i b_{ii} +h^2\sum_i b^{ii},  \\
  \sum_{k}b^{ii}h_kh_{kii}
  =\sum_{k}b^{ii}h_k(b_{ki;i}-h_i\delta_{ki})
  =\sum_{k}b^{ii}h_kb_{ii;k} -b^{ii}h_i^2.
\end{gather*}
Here the fact that $b_{ij;k}$ is symmetric in all indices is used to
get the third equality above. Inserting the inequality
\eqref{eq:199} into \eqref{eq:19}, we obtain that
\begin{equation}\label{eq:20}
  \begin{split}
    \frac{b^{11}\pd_th_{11}-b^{11}h_{11}}{h-\pd_th}
    &\leq
    \frac{(n-1)A}{h} -(A+2Bh^2+1) \sum_i b^{ii}
    -\frac{A-2Bh^2}{h^2}b^{ii}h_i^2 \\
    &\hskip1.2em  +4(n-1)Bh -2B\sum_i b_{ii} \\
    &\hskip1.2em -2B\sum_{k}b^{ii}h_kb_{ii;k} +b^{11}(n-1-w_{11}).
  \end{split}
\end{equation}
Using \eqref{C2-00}, we get
\begin{equation}\label{eq:18}
\frac{2B\sum_k h_k\pd_th_k}{h-\pd_th} = \frac{2B|\nabla
h|^2}{h-\pd_th} +2B\sum_{k}b^{ii}h_kb_{ii;k} -2B\langle \nabla
h,\nabla w \rangle.
\end{equation}
Now dividing \eqref{eq:13} by $h-\pd_th$ gives
\begin{equation*}
  \begin{split}
    0 &\leq \frac{b^{11}(\pd_th_{11}-h_{11}+b_{11}-h+\pd_th)}{h-\pd_th}
    -\frac{A\pd_th}{h(h-\pd_th)}
    +\frac{2B\sum_k h_k\pd_th_k}{h-\pd_th}  \\
    &= \frac{b^{11}(\pd_th_{11}-h_{11})}{h-\pd_th}
    +\frac{2B\sum_k h_k\pd_th_k}{h-\pd_th}
    -b^{11} +\frac{A}{h}-\frac{A-1}{h-\pd_th},
  \end{split}
\end{equation*}
which together with \eqref{eq:20} and \eqref{eq:18} implies that
\begin{equation*}
  \begin{split}
    0
    &\leq \frac{nA}{h} -(A+2Bh^2+1) \sum_i b^{ii}
    -\frac{A-2Bh^2}{h^2}\sum_i b^{ii}h_i^2 \\
    &\hskip1.2em  +4(n-1)Bh -2B\sum_i b_{ii}  +b^{11}(n-2-w_{11}) \\
    &\hskip1.2em -2B\langle \nabla h,\nabla w \rangle
    -\frac{A-1-2B|\nabla h|^2}{h-\pd_th}.
  \end{split}
\end{equation*}
Now we choose $A=n+2BC^2$, where $C$ is the constant in Lemma
\ref{lem04}, we can obtain
\begin{equation}\label{eq:22}
  (A-n+3) \sum_i b^{ii}
  +2B\sum_i b_{ii}
  \leq
  C_1(A+B)
  -b^{11}w_{11}
  -2B\langle \nabla h,\nabla w \rangle,
\end{equation}
where $C_1$ is a positive constant depending only on $n$ and the constant $C$
in Lemma \ref{lem04}.

A direct calculation gives
\begin{eqnarray*}
  \nabla_i G(\nabla h+hx) = \langle\overline{\nabla} G,\overline{\nabla}_i \overline{\nabla}h\rangle
  =\langle\overline{\nabla}G,e_k \rangle b_{ik},
\end{eqnarray*}
and
\begin{eqnarray*}
    \nabla_{i}\nabla_{j} G (\nabla h+hx)&=&\overline{\nabla}^2 G\Big( \overline{\nabla}_j(\overline{\nabla}h)
    \rangle, \overline{\nabla}_i(\overline{\nabla}h)\Big)+
    \Big\langle \overline{\nabla} G, \overline{\nabla}_{i}\overline{\nabla}_{j} (\overline{\nabla}h)\Big\rangle\\
    &=&\overline{\nabla}^2 G\Big(\sum_{k}b_{ik}e_k, \sum_{l}b_{jl}e_l\Big)-\langle
    \overline{\nabla} G ,x \rangle b_{ij}+\sum_{k}\langle \overline{\nabla} G ,e_k \rangle b_{ij; k}.
\end{eqnarray*}
Thus, we have
\begin{equation*}
  \begin{split}
    -2B\langle \nabla h,\nabla w \rangle
    &= -2B \sum_k h_k \left(
      \frac{f_k}{f}
      +\frac{\sum_{l}\langle\overline{\nabla}G,e_l \rangle b_{kl}}{G}
    \right) \\
    &\leq C_2B
    +\sum_k \frac{2Bh_k\langle\overline{\nabla}G,e_k \rangle b_{kk}}{G},
  \end{split}
\end{equation*}
where $C_2$ is a positive constant depending only on the constants
$C$ in Lemma \ref{lem04}, $\norm{f}_{C^1(\uS)}$ and
$\min\limits_{\uS} f$. Moreover, we have by Lemma \ref{G-g}
\begin{eqnarray*}
    -w_{11}
    &=& \frac{f_1^2}{f^2} -\frac{f_{11}}{f}
    +\frac{[\langle\overline{\nabla}G,e_1 \rangle b_{11}]^2}{G^2}\\
    &&-\frac{1}{G}\Big[\overline{\nabla}^2 G(e_1,e_1) b^2_{11}-\langle \overline{\nabla} G ,x \rangle b_{11}
    +\langle \overline{\nabla} G ,e_k \rangle b_{11,k}\Big]\\
    &\leq& C_3(1+b_{11}+b_{11}^2) +\frac{\langle \overline{\nabla} G ,e_k \rangle b_{11,k}}{G},
\end{eqnarray*}
where $C_3$ is a positive constant depending only on the constants
$C$ in Lemmas \ref{lem04} and \ref{lem06}, $\norm{f}_{C^2(\uS)}$,
$\norm{g}_{C^2(\uS)}$, $\min\limits_{\uS} f$ and $\min\limits_{\uS}
g$. Therefore, combining the two inequalities above, we have
\begin{equation}\label{eq:21}
  \begin{split}
    &-b^{11}w_{11} -2B\langle \nabla h,\nabla w \rangle
   \\ \leq& C_3(b^{11}+1+b_{11}) + C_2B+\sum_{k}\frac{\langle \overline{\nabla}G,e_k\rangle}{G} (b^{11}b_{11;k} + 2Bh_k h_{kk}) \\
    =& C_3(b^{11}+1+b_{11}) + C_2B
    +\sum_{k}\frac{\langle \overline{\nabla}G,e_k\rangle}{G}\cdot \frac{Ah_k}{h} \\
    \leq& C_3(b^{11}+1+b_{11}) + C_2B +C_4A,
  \end{split}
\end{equation}
where we have used the equality \eqref{C2-1d}, and $C_4$ is a
positive constant depending only on the constants $C$ in Lemma
\ref{lem04} and $n$. Inserting \eqref{eq:21} into \eqref{eq:22}, we
have
\begin{equation*}
  (A-n) \sum_i b^{ii}
  +2B\sum_i b_{ii}
  \leq
  (C_1+C_4)A
  +(C_1+C_2)B
  +C_3(b^{11}+1+b_{11}),
\end{equation*}
which together with $A=n+2BC^2$ implies that
\begin{equation*}
  \begin{split}
  &(2BC^2-C_3) \sum_i b^{ii}
  +(2B-C_3)\sum_i b_{ii} \\
  \leq&
  (C_1+C_4)(n+2BC^2)
  +(C_1+C_2)B
  +C_3.
  \end{split}
\end{equation*}
If we choose $B=\max\set{\frac{C_3+1}{2}, \frac{C_3+1}{2C^2}}$, we
see that $b_{11}(x_0,t_0)$ is bounded from above by a positive
constant depending only on $n$, $C_1$, $C_2$, $C_3$ and $C_4$. Using
Lemmas \ref{lem04}, $\widetilde{\Lambda}(x_0,t_0)$ is bounded from
above by a positive constant depending only on $n$, $C$, $C_1$,
$C_2$, $C_3$ and $C_4$. Thus, we prove the conclusion of this lemma,
by noticing that $T'$ can be any number in $(0,T)$.
\end{proof}

\section{Existence of solutions}

In this section, we will complete the proof of Theorem \ref{thm2}.
Combining Lemma \ref{lem07} and Lemma \ref{lem08}, we see that the
principal curvatures of $M_{t}$ has uniform positive upper and lower
bounds. This together with Lemmas \ref{lem04} and \ref{lem06}
implies that the evolution equation \eqref{ht} is uniformly
parabolic on any finite time interval. Thus, using Krylov- Safonov
estimates \cite{KS.IANSSM.44-1980.161} and Schauder estimates of the
parabolic equations, we can say that the smooth solution of
\eqref{ht} exists for all time. And by these estimates again, a
subsequence of $M_t$ converges in $C^\infty$ to a positive, smooth,
uniformly convex hypersurface $M_\infty$ in $\R^n$. Now to complete
the proof of Theorem \ref{thm2}, it remains to check the support
function of $M_\infty$ satisfies Eq. \eqref{dOMP-f}.

Let $\tilde{h}$ be the support function of $M_\infty$. We need to prove that
$\tilde{h}$ is a solution to the following equation
\begin{equation} \label{dOMP-f1}
g\bigg(\frac{\nabla h+hx}{|\nabla h+hx|}\bigg) |\nabla h+hx|^{-n}h
\det(\nabla^2h +hI) =f \text{ on } \uS.
\end{equation}
By Lemma \ref{lem02}, $J'(t)\leq0$ for any $t>0$. Since
\begin{equation*}
\int_0^t [-J'(t)] \dd t =J(0)-J(t) \leq C \int_{\uS} f dx,
\end{equation*}
we get
\begin{equation*}
\int_0^\infty [-J'(t)] \dd t  \leq C \int_{\uS} f dx.
\end{equation*}
Thus, there exists a subsequence of times $t_j\to\infty$ such that
\begin{equation*}
-J'(t_j) \to 0 \text{ as } t_j\to\infty.
\end{equation*}
Thus, we obtain using \eqref{eq:23}
\begin{equation*}
\int_{\uS} \frac{1}{g\tilde{h}\widetilde{\mathcal{K}} \tilde{\rho}^n}(g\tilde{h}-f\widetilde{\mathcal{K}}\tilde{\rho}^n)^2 dx=0,
\end{equation*}
where $\widetilde{\mathcal{K}}$ is the Gauss curvature of $M_\infty$. It implies that
\begin{equation*}
  g(u) \frac{\tilde{h}(x)}{\widetilde{\mathcal{K}}\tilde{\rho}^n(u)}=f(x) \quad \mbox{on}\quad \uS,
\end{equation*}
which means $\tilde{h}$ is a solution to the equation
\eqref{dOMP-f1}.


\end{document}